\newtheorem{theorem}{Theorem}[section]
\newtheorem{corollary}[theorem]{Corollary}
\newtheorem{definition}[theorem]{Definition}
\newtheorem{example}[theorem]{Example}
\newtheorem{remark}[theorem]{Remark}
\newtheorem{lemma}[theorem]{Lemma}
\newtheorem{question}[theorem]{Question}
\begin{document}

%\title{Topological lower bound for the Poisson bracket invariant on surface}

\title{The Poisson bracket invariant for open covers consisting of topological disks on surfaces}
\date{June 6, 2022}
\author{Kun Shi and Guangcun Lu
\thanks{Corresponding author
\endgraf \hspace{2mm} 2020 {\it Mathematics Subject Classification}:
 53D50, 57K20.
\endgraf \hspace{2mm}  {\it Key words and phrases}:
 Poisson bracket, Poisson bracket invariant, topological disk, closed surface.
\endgraf\hspace{2mm} 
Supported by the NNSF  11271044 of China
 and the Fundamental Research Funds for the Central Universities grant.
}}
 \maketitle \vspace{-0.3in}

%\abstract{Recently, L. Buhovsky, A. Logunov and S. Tanny proved the (strong) Poisson bracket conjecture
%by Leonid Polterovich in dimension $2$.
%In this note, instead of open cover constituted of displaceable sets in their work,
% considering open cover constituted of topological discs
%we give a sufficient and necessary condition that Poisson bracket invariants of these covers are positive.}

\begin{abstract}
L. Buhovsky, A. Logunov and S. Tanny proved the (strong) Poisson bracket conjecture by Leonid Polterovich in dimension $2$.
In this note, instead of open cover consisting of displaceable sets in their work,
 we consider open cover constituted of topological discs
and give a necessary and sufficient condition that Poisson bracket invariants of these covers are positive.
\end{abstract}
\vspace{-0.1in}
\medskip\vspace{12mm}

\maketitle \vspace{-0.5in}

\section{Introduction and results}
\setcounter{equation}{0}

 Throughout this note, we always assume that $(M,\omega)$ is a closed connected symplectic manifold
 and that $\|\cdot\|:C^{\infty}(M)\rightarrow [0,\infty)$ denotes the $C^0$-norm or $L^{\infty}$-norm of smooth functions on $M$.
  Recall that a subset
$X$ of $M$ is called {\it displaceable} if there exists a Hamiltonian diffeomorphism
$\phi\in{\rm Ham}(M,\omega)$ such that $\phi(X)$ is disjoint with $X$ (\cite{Ho90}).
Under the assumption that $(M,\omega)$ is also semi-positive,
 Entov and Polterovich \cite{EnPo06} proved a surprising link between
  non-displaceability and Poisson commutativity,
the so-called non-displaceable fiber theorem, which claimed that if a smooth map $\vec{f}=(f_1,\cdots,f_N):M\to\mathbb{R}^N$ has
   pairwise Poisson commuting  coordinate functions $f_i$, $i=1,\cdots, N$, then there exists one fiber $(\vec{f})^{-1}(p)$ which is non-empty and
 non-displaceable.
   In the book \cite{PoR14} by Polterovich and Rosen, the assumption of semipositivity
   was removed out (see \cite[Theorem~6.1.8]{PoR14}), and there exists the following
  % This theorem is equivalent to the following
 rigidity of partitions of unity (see \cite{EnPoZa07}): any finite open cover of $M$  by open displaceable
sets does not admit a Poisson commuting partition of unity.

We say an open cover $\mathcal{U}$ of $M$ to be
 {\it connected} (resp. {\it displaceable}, resp.
 {\it connected and displaceable})
if each element of $\mathcal{U}$ is connected (resp.
displaceable, resp. connected and displaceable) in $M$.

 In order to measure the Poisson noncommutativity of a smooth partition of unity
 $\mathcal{F}:=\{f_i\}_{i=1}^N$  subordinated to a  connected and displaceable
finite open cover $\mathcal{U}:=\{U_i\}_{i=1}^N$ of $M$, Polterovich \cite{Po12} defined the {\it magnitude} of its Poisson non-commutativity by
 $$
\nu_c(\mathcal{F}):=\max_{a,b\in[-1,1]^N}\left\|\left\{\sum_{i=1}^{N}a_if_i,\sum_{j=1}^{N}b_jf_j\right\}\right\|,
$$
 and the {\it Poisson bracket invariant} of $\mathcal{U}$ by
$$
{\rm pb}(\mathcal{U}):=\inf_{\mathcal{F}}\nu_c(\mathcal{F}),
$$
where the infimum is taken over all partitions of unity $\mathcal{F}$ subordinate to
$\mathcal{U}$.  Polterovich \cite{Po14} gave several lower bounds for ${\rm pb}(\mathcal{U})$ and proposed:

\begin{question}[\hbox{\rm \cite[Question~8.1]{Po14}}]\label{q:1}
{\rm Is it true that ${\rm pb}(\mathcal{U})\ge C/e(\mathcal{U})$, where the constant $C$ depends only on
the symplectic manifold $(M, \omega)$, and where $e(\mathcal{U}):=\max_{i\in I}e(U_i)$ and
$e(U_i)$ is the displacement energy of $U_i$?}
\end{question}

This was called the {\it strong Poisson bracket conjecture}
in \cite{Pay19}, and studied in references \cite{BT17, BLT17, BEP12, Ga18, Is15, LaPa19,  Pay18, Pay19, Se15}.

Recently, Buhovsky, Logunov and Tanny affirmatively answered this question in dimension $2$ (see \cite{BLT17}).
Let $\mathcal{U}$ be a finite open cover on smooth manifold $M$.
A set $U$ in $\mathcal{U}$ is called {\it essential}  if $\mathcal{U}\setminus\{U\}$ is not a cover (see \cite[Def.1.6]{BLT17}).
 Denote by $\mathcal{J}(\mathcal{U})\subset\mathcal{U}$ the collection
  of essential sets of $\mathcal{U}$. Let
$|\mathcal{U}|$ denotes the number of open subsets in $\mathcal{U}$.
A subcover $\mathcal{U}_0$ of ~$\mathcal{U}$~ is said to be {\it smallest} if $|\mathcal{U}_0|$ is equal to
$$
\kappa(\mathcal{U}):=\min_{\mathcal{U}'\subset\mathcal{U}}\{|\mathcal{U}'|\;\big|\;\mathcal{U}'\; \text{is also a cover of }\; M\}.
$$
 Clearly, the smallest subcover  of ~$\mathcal{U}$~ is not necessarily unique, and each of them is
 a minimal cover of $M$. Moreover, a subcover  of $\mathcal{U}$ which is  a minimal cover of $M$ is not always a smallest subcover
 of $\mathcal{U}$, since the cardinality of a minimal cover of $M$ may be greater than $\kappa(\mathcal{U})$.
 Each element of $\mathcal{U}_0$ is essential for the cover $\mathcal{U}_0$, but it not always  essential for $\mathcal{U}$.
 %Note that each element of $\mathcal{U}_0$ is the essential set of $\mathcal{U}_0$ but not always the essential set of $\mathcal{U}$.
  Buhovsky, Logunov and Tanny proved the following results.

\begin{theorem}[\hbox{\cite[Theorem~1.7 and Remark~1.8]{BLT17}}]\label{th:1.2}
Let $(M,\omega)$ be a closed and connected symplectic surface. Let $\mathcal{U}=\{U_i\}_{i\in I}$ be a finite open cover of $M$ by topological discs of area less than ${\rm Area}(M)/2$, and let $\{f_i\}_{i\in I}$ be any partition of unity subordinate to $\mathcal{U}$. Then there exists an absolute constant $c>0$, which is independent of $(M,\omega)$, $\mathcal{U}$ and $\{f_i\}_{i\in I}$,
 such that
$$
{\rm pb}(\mathcal{U})\ge \frac{|\mathcal{J}(\mathcal{U})|c}{{\rm Area}(M)}\quad\hbox{and}\quad
{\rm pb}(\mathcal{U})\ge \frac{c}{\min_{U\in\mathcal{J}(\mathcal{U})}{\rm Area}(U)},
$$
where we set the minimum to be infinity if $\mathcal{J}(\mathcal{U})$ is empty.
%where we set the minimum of an empty set to be infinity.
\end{theorem}

\begin{theorem}[\hbox{\cite[Theorem~$1.5^\prime$]{BLT17}}]\label{th:1.2.1}
	Let $ (M,\omega) $ be a closed and connected symplectic surface. Let $\mathcal U=\{U_i\}_{i\in I}$, $\mathcal V=\{V_j\}_{j\in J}$ be {finite}
	open covers of $M$, and let $\{f_i\}_{i\in I}$,  $\{g_j\}_{j\in J}$ be partitions of unity subordinate to $\mathcal U$, $\mathcal V$ correspondingly. Then,
	\begin{equation}\label{eq:gen_cover-reformulation}
	\int_M \sum_{i\in I}\sum_{j\in J} |\{f_i,g_j\}|\ \omega \geq \frac{{\rm Area}(M)}{2\cdot \max(e(\mathcal U), e(\mathcal V))}.
	\end{equation}
\end{theorem}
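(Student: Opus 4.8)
The plan is to bound from below the infimum of $\sum_{i,j}\|\{f_i,g_j\}\|_{L^1}$ over all subordinate partitions of unity (an equivalent, and for the closing step more convenient, formulation), where $\|\cdot\|_{L^1}$ is taken against $\omega$; note that the left side of \eqref{eq:gen_cover-reformulation} is exactly $\sum_{i,j}\|\{f_i,g_j\}\|_{L^1}$. I would use the two features special to a surface: first, $\int_M|\{f,g\}|\,\omega$ is the first‑order quantity $\int_M|df\wedge dg|$, evaluated by the coarea formula $\int_M|df\wedge dg|=\int_{\mathbb{R}}\bigl(\int_{\{g=c\}}|\partial_\tau f|\,ds\bigr)dc$ along level curves; second, a displaceable open set $X$ can be pushed off itself by a Hamiltonian whose Hofer norm barely exceeds $e(X)$.

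Set $e:=\max(e(\mathcal U),e(\mathcal V))$ and fix $\varepsilon>0$. Since $e(V_j)\le e$, for each $j$ pick a Hamiltonian $G^{(j)}=(G^{(j)}_t)$ whose time‑$1$ map $\psi_j$ satisfies $\psi_j(V_j)\cap V_j=\emptyset$ and $\int_0^1\bigl(\max_M G^{(j)}_t-\min_M G^{(j)}_t\bigr)\,dt<e+\varepsilon$; in particular $V_j\cap\psi_j^{-1}(V_j)=\emptyset$. The engine is the transport identity: for smooth $\rho,h$ and $\phi=\phi^1_K$, differentiating $t\mapsto h\circ\phi^t_K$, changing variables by $\phi^t_K$, and using that $(h,a,b)\mapsto\int_M h\{a,b\}\,\omega$ is cyclically symmetric on a closed surface give
\[
\int_M\rho\,(h-h\circ\phi)\,\omega=\int_0^1\!\!\int_M K_t\,\{\rho\circ\phi^{-t}_K,\,h\}\,\omega\,dt,
\]
whence, replacing $K_t$ by $K_t-\tfrac12(\max K_t+\min K_t)$ and using $\|\{\rho\circ\phi^{-t}_K,h\}\|_{L^1}=\|\{\rho,h\circ\phi^t_K\}\|_{L^1}$,
\[
\Bigl|\int_M\rho\,(h-h\circ\phi)\,\omega\Bigr|\le\tfrac12\Bigl(\textstyle\int_0^1(\max K_t-\min K_t)\,dt\Bigr)\sup_{0\le t\le1}\|\{\rho,\,h\circ\phi^t_K\}\|_{L^1}.
\]
I apply this with $h=g_j$, $\phi=\psi_j$, $K=G^{(j)}$, and $\rho=F^{(j)}:=\sum_i a^{(j)}_i f_i$, where $a^{(j)}\in[-1,1]^I$ is taken with $a^{(j)}_i=-1$ whenever $U_i$ meets $V_j$ and $a^{(j)}_i=+1$ whenever $U_i$ meets $\psi_j^{-1}(V_j)$. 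Assuming for the moment that no $U_i$ meets both sets, $\sum_i f_i\equiv1$ forces $F^{(j)}\equiv-1$ on $V_j$ and $F^{(j)}\equiv+1$ on $\psi_j^{-1}(V_j)$, so the left side equals $-2\int_M g_j\,\omega$; and $\{F^{(j)},g_j\circ\psi_j^t\}=\sum_i a^{(j)}_i\{f_i,g_j\circ\psi_j^t\}$ gives $\|\{F^{(j)},g_j\circ\psi_j^t\}\|_{L^1}\le\sum_i\|\{f_i,g_j\circ\psi_j^t\}\|_{L^1}$. Summing over $j$ and using $\sum_j\int_M g_j\,\omega=\mathrm{Area}(M)$,
\[
\mathrm{Area}(M)\le\frac{e+\varepsilon}{4}\sum_j\ \sup_{0\le t\le1}\ \sum_i\|\{f_i,\,g_j\circ\psi_j^t\}\|_{L^1}.
\]

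The main obstacle is the residual transport: the right side involves $g_j\circ\psi_j^t$, i.e.\ $g_j$ moved by the displacement, rather than $g_j$ itself, and equivalently there is the combinatorial point that choosing $F^{(j)}$ as above requires no $U_i$ to straddle both $V_j$ and $\psi_j^{-1}(V_j)$. I would dispatch both at once by displacing economically: on a surface a displaceable set can be pushed off itself through a ``corridor'' $R_j\supset\bigcup_{0\le t\le1}\psi_j^{-t}(V_j)$ of area comparable to $\mathrm{Area}(V_j)$ and displaceable with energy $O(e)$, and a $U_i$ straddling such a thin corridor has small area and can be split off at controlled cost, so one may arrange $F^{(j)}$ to be at least ``$-1$ on average over $g_j$'' on $V_j$ and ``$+1$ on average'' on $\psi_j^{-1}(V_j)$. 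Then, since the level curves of $g_j\circ\psi_j^t$ lie in $R_j$, the coarea formula $\|\{f_i,g_j\circ\psi_j^t\}\|_{L^1}=\int_0^1\bigl(\int_{\{g_j\circ\psi_j^t=c\}}|\partial_\tau f_i|\,ds\bigr)dc$ bounds $\sum_i\|\{f_i,g_j\circ\psi_j^t\}\|_{L^1}$ by $\sum_i\|\{f_i,g_j\}\|_{L^1}$ up to an error that the generous constant slack between $\mathrm{Area}(M)$ and the target $\mathrm{Area}(M)/(2e)$ absorbs; alternatively one runs an iteration on $\inf_{\{f_i\},\{g_j\}}\sum_{i,j}\|\{f_i,g_j\}\|_{L^1}$, bounding the transported sum by this infimum for a marginally coarser cover and closing the loop. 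Assembling the pieces and letting $\varepsilon\downarrow0$ yields $\sum_{i,j}\|\{f_i,g_j\}\|_{L^1}\ge\mathrm{Area}(M)/\bigl(2\max(e(\mathcal U),e(\mathcal V))\bigr)$, which is \eqref{eq:gen_cover-reformulation}; the argument being symmetric in the two covers, one is of course free to displace whichever of $\mathcal U$, $\mathcal V$ has the smaller displacement energy.
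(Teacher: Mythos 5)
The first half of your argument (the transport identity, the Hofer-energy bound with the oscillation of $G^{(j)}_t$, and the choice of the test function $F^{(j)}=\sum_i a^{(j)}_if_i$) is sound as far as it goes, but everything after ``The main obstacle'' is precisely where the theorem lives, and it is not proved. Two independent gaps remain. First, nothing rules out a $U_i$ meeting both $V_j$ and $\psi_j^{-1}(V_j)$: the $U_i$ are arbitrary open sets (no connectivity or size restriction), so this happens routinely, and your remedy --- displacing through a ``corridor'' $R_j\supset\bigcup_t\psi_j^{-t}(V_j)$ of area comparable to ${\rm Area}(V_j)$ with energy $O(e)$, then ``splitting off'' straddling $U_i$'s ``at controlled cost'' --- is neither constructed nor estimated; a near-minimal-energy displacement need not confine its sweep to a region of controlled area, and no quantitative cost is ever attached to the splitting. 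Second, and more fatally, your inequality bounds ${\rm Area}(M)$ by the \emph{transported} brackets $\|\{f_i,g_j\circ\psi_j^t\}\|_{L^1}$, and the passage back to $\|\{f_i,g_j\}\|_{L^1}$ is only asserted. There is no such comparison in general: the coarea expression $\int\bigl(\int_{\{g_j\circ\psi_j^t=c\}}|\partial_\tau f_i|\,ds\bigr)dc$ measures the variation of $f_i$ along curves that bear no relation to the level curves of $g_j$, and confining those curves to a small-area corridor gives no bound on that variation; indeed $\{f_i,g_j\}$ may vanish identically while $\{f_i,g_j\circ\psi_j^t\}$ is large, so the discrepancy is not a constant-factor loss absorbable by the slack between your idealized $4\,{\rm Area}(M)/e$ and the target ${\rm Area}(M)/(2e)$. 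The alternative ``iteration on the infimum for a marginally coarser cover'' is not formulated and, as sketched, is circular.

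For context: this paper does not prove Theorem~\ref{th:1.2.1} at all --- it is quoted from \cite{BLT17} --- and the proof there (like the proof of the analogous Theorem~\ref{th:1.4} in Section~\ref{sec:3}) is of a different nature. One builds from the partitions of unity auxiliary covers by superlevel sets $\{f_i>s\}$, $\{g_j>t\}$ with randomly chosen levels, proves a combinatorial/topological lower bound on the number of intersection points of the two families of boundary level curves, and converts that count into $\int_M|\{f_i,g_j\}|\,\omega$ via the level-set counting formula of Lemma~\ref{le:2.2}; displaceability enters only through area/energy bounds for discs containing the faces, never through an actual Hamiltonian isotopy, so the issue of transported brackets never arises. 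Your Hamiltonian-transport scheme is in the spirit of Buhovsky--Entov--Polterovich-type averaging arguments and would be a genuinely different route if completed, but the two steps above are exactly where such schemes get stuck, so as written the proposal does not establish (\ref{eq:gen_cover-reformulation}).
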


Note that the definition of displaceability at the bottom of \cite[page~1]{BLT17}
is slightly different from ours. They called a subset
$X$ of $M$ {\it displaceable} if the closure $\overline{X}$ of $X$ is displaceable in our definition above.
Thus, in our language, \cite[Corollary~1.9]{BLT17} may be reformulated as:

\begin{corollary}[\hbox{\cite[Corollary~1.9]{BLT17}}]\label{cor:1.2.2}
	Let $ (M,\omega) $ be a closed and connected symplectic surface, and let $\mathcal{U}=\{U_i\}_{i\in I}$ be
a finite connected open cover of $M$ such that the closure of each $U_i$ is  displaceable. Then for an absolute constant $c>0$, it holds that
	\begin{equation}\label{eq:pb-bound}
	{\rm pb}(\mathcal{U})\geq \frac{c}{e(\mathcal U)}.
	\end{equation}
\end{corollary}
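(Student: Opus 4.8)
The plan is to deduce \eqref{eq:pb-bound} from Theorem~\ref{th:1.2.1}, applied with $\mathcal V=\mathcal U$, by combining the resulting integrated estimate with an elementary pointwise inequality. If some $\overline{U_i}$ is nondisplaceable then $e(\mathcal U)=\infty$ and \eqref{eq:pb-bound} is vacuous, so assume $e(\mathcal U)<\infty$. Since $pb(\mathcal U)=\inf_{\mathcal F}\nu_c(\mathcal F)$, it suffices to show
\begin{equation}\label{eq:goal-nu}
\nu_c(\mathcal F)\ \ge\ \frac{c}{e(\mathcal U)}
\end{equation}
for an arbitrary partition of unity $\mathcal F=\{f_i\}_{i\in I}$ subordinate to $\mathcal U$. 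Applying Theorem~\ref{th:1.2.1} with $\mathcal V:=\mathcal U$ and $\{g_j\}:=\{f_j\}$ gives, since $\max(e(\mathcal U),e(\mathcal V))=e(\mathcal U)$,
\begin{equation}\label{eq:L1self}
\int_M\ \sum_{i,j\in I}|\{f_i,f_j\}|\ \omega\ \ge\ \frac{{\rm Area}(M)}{2\,e(\mathcal U)}.
\end{equation}

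The point is to estimate the integrand in \eqref{eq:L1self} by a single bracket, using the following elementary two-dimensional fact, which I would isolate as a lemma: \emph{there is an absolute constant $C_0>0$ such that for every area form $\sigma$ on a two-dimensional real vector space $V$ and every finite family $(u_i)_{i\in I}$ of vectors in $V$,}
\begin{equation}\label{eq:la}
\sum_{i,j\in I}\bigl|\sigma(u_i,u_j)\bigr|\ \le\ C_0\,\max_{\varepsilon,\delta\in\{-1,1\}^I}\,\Bigl|\,\sigma\Bigl(\,\sum_{i\in I}\varepsilon_i u_i\,,\ \sum_{j\in I}\delta_j u_j\,\Bigr)\Bigr|.
\end{equation}
Granting \eqref{eq:la}, fix $x\in M$ and apply it with $V=T_xM$, with $\sigma=\omega_x$ (an area form, as $\dim M=2$), and with $u_i=X_{f_i}(x)$, the Hamiltonian vector field of $f_i$ at $x$. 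Since $|\{f_i,f_j\}(x)|=|\omega_x(X_{f_i}(x),X_{f_j}(x))|$ and $\sum_i\varepsilon_i X_{f_i}=X_{F_\varepsilon}$ with $F_\varepsilon:=\sum_i\varepsilon_i f_i$, inequality \eqref{eq:la} becomes
\begin{equation}\label{eq:pointwise}
\sum_{i,j\in I}|\{f_i,f_j\}(x)|\ \le\ C_0\,\max_{\varepsilon,\delta\in\{-1,1\}^I}\bigl|\{F_\varepsilon,F_\delta\}(x)\bigr|\ \le\ C_0\,\nu_c(\mathcal F),
\end{equation}
the last inequality because $\{-1,1\}^I\subset[-1,1]^I$, so $|\{F_\varepsilon,F_\delta\}(x)|\le\|\{F_\varepsilon,F_\delta\}\|\le\nu_c(\mathcal F)$ for every $\varepsilon,\delta$. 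Integrating \eqref{eq:pointwise} over $M$ and using \eqref{eq:L1self} yields ${\rm Area}(M)/(2e(\mathcal U))\le C_0\,{\rm Area}(M)\,\nu_c(\mathcal F)$, hence \eqref{eq:goal-nu} with the absolute constant $c:=1/(2C_0)$.

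The real content is \eqref{eq:la}, and proving it with a constant independent of $|I|$ is where I expect the main obstacle. A ``random signs'' computation only gives $\max_{\varepsilon,\delta}|\sigma(\sum\varepsilon_i u_i,\sum\delta_j u_j)|\ge(\sum_{i,j}\sigma(u_i,u_j)^2)^{1/2}$, which by Cauchy--Schwarz loses a factor growing with $|I|$ (already for $|I|$ unit vectors evenly spaced on a circle), so one must exploit that $V$ is two-dimensional. I would identify $(V,\sigma)$ with $\mathbb C$ carrying $(u,v)\mapsto{\rm Im}(\bar u v)$; since replacing a $u_i$ by $-u_i$ changes neither side of \eqref{eq:la}, one may assume every $u_i$ lies in the closed upper half-plane and, after reindexing, that $0\le\arg u_1\le\cdots\le\arg u_{|I|}<\pi$, so that $\sigma(u_i,u_j)\ge0$ for $i<j$ and $\sum_{i,j}|\sigma(u_i,u_j)|=2\sum_{i<j}\sigma(u_i,u_j)$. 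One then takes $\varepsilon$ and $\delta$ to be $\{-1,1\}$-valued step functions of the index $i$, with jumps at a bounded number of cut points placed adaptively at suitable ``angular thresholds'' among the $\arg u_i$; any such pair realises, up to sign, a sum of a few ``cut values'' $\sigma\bigl(\sum_{i\in B}u_i,\sum_{j\in B'}u_j\bigr)$ over consecutive blocks $B,B'$, each of which is a nonnegative partial sum of the numbers $\sigma(u_i,u_j)$ with $i<j$. The crux is to show that, with a bounded number of well-chosen thresholds, one of these expressions is at least a fixed fraction --- an absolute constant --- of $\sum_{i<j}\sigma(u_i,u_j)$.
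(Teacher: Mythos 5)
Your overall route is sound, and it is in fact how this corollary is obtained in \cite{BLT17} (the present paper only quotes the statement, with a remark on conventions, and gives no proof of its own): apply Theorem~\ref{th:1.2.1} with $\mathcal V=\mathcal U$ and $g_j=f_j$ to get the $L^1$ lower bound $\int_M\sum_{i,j}|\{f_i,f_j\}|\,\omega\ge {\rm Area}(M)/(2e(\mathcal U))$, and then convert this into a lower bound for $\nu_c(\mathcal F)$ by an estimate of the type ``sum of brackets $\lesssim$ one bracket of signed combinations''. The displaceability of the closures is used only to know $e(\mathcal U)<\infty$, exactly as you say, and the reduction $pb(\mathcal U)=\inf_{\mathcal F}\nu_c(\mathcal F)$ is handled correctly.

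The one incomplete point is your key two-dimensional inequality with an absolute constant (your ``lemma'' about an area form $\sigma$ and vectors $u_i$), whose proof you only sketch and whose crux you explicitly leave open. You do not need to prove it from scratch: it is precisely the surface case of Lemma~\ref{le:2.1} (Lemma~1.3 of \cite{BLT17}), already quoted in the paper. Indeed, since $(a,b)\mapsto\{\sum_i a_if_i,\sum_j b_jf_j\}(x)$ is bilinear, its maximal absolute value over $[-1,1]^N\times[-1,1]^N$ is attained at vertices, so maximizing over sign vectors is equivalent to the maximum appearing in the definition of $\nu_c$; moreover you do not even need a pointwise-in-$x$ statement, because Lemma~\ref{le:2.1} directly gives $\nu_c(\mathcal F)\ge c(1)\max_M\sum_{i,j}|\{f_i,f_j\}|$, and bounding the integrand by its maximum over $M$ yields ${\rm Area}(M)/(2e(\mathcal U))\le {\rm Area}(M)\,\nu_c(\mathcal F)/c(1)$, hence $pb(\mathcal U)\ge c(1)/(2e(\mathcal U))$ with an absolute constant since $\dim M=2$. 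If you insist on proving the sign-selection inequality directly, be aware that the adaptive-threshold argument you outline is essentially the content of the proof of that lemma in \cite{BLT17} and requires genuine work; as written it is a plausible plan but not a proof, so as it stands your argument has a gap exactly at that point, which is most efficiently closed by citing Lemma~\ref{le:2.1}.
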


In Theorem~\ref{th:1.2}, since  the second inequality  holds trivially if
 $\mathcal{J}(\mathcal{U})=\emptyset$,  it is necessary  to assume that
the cover $\mathcal{U}$  contains at least one essential set
so that $\min_{U\in\mathcal{J}(\mathcal{U})}{\rm Area}(U)$ is a positive number.
Note also that the condition
\begin{center}
``$\max_{1\le i\le N}{\rm Area}(U_i)<{\rm Area}(M)/2$"
\end{center}
in Theorem~\ref{th:1.2} implies $\kappa(\mathcal{U})\ge 3$ (because $M$ is closed and each $U_i$ is
a topological disc of area less than ${\rm Area}(M)/2$). It is obvious that the converse is not true in general.

\begin{example}\label{ex:converse}
{\rm Let $\omega$ be the standard area form on
$$
M=S^2=\{(x_1,x_2,x_3)\in\mathbb{R}^3\,|\,x_1^2+x_2^2+x_3^2=1\},
$$
and $\mathcal{U}=\{U_1,U_2,U_3\}$, where
 \begin{equation}\label{e:converse}
\left.\begin{array}{ll}
&U_1=\{(x_1,x_2,x_3)\in S^2\,|\,x_3<1/2\},\\
& U_2=\{(x_1,x_2,x_3)\in S^2\,|\,x_3>0,\;x_1>-1/4\},\\
&U_3=\{(x_1,x_2,x_3)\in S^2\,|\,x_3>0,\;x_1<1/4\}.
\end{array}\right\}
\end{equation}
Clearly,  $\kappa(\mathcal{U})=3$, but ${\rm Area}(U_1)>{\rm Area}(M)/2$.}
\end{example}

%The following result  claims that the condition ``$\max_{1\le i\le N}{\rm Area}(U_i)<{\rm Area}(M)/2$"
%in Theorem~\ref{th:1.2}  may be replaced by  ``$\kappa(\mathcal{U})\ge 3$" and ``$\mathcal{J}(\mathcal{U})\ne\emptyset$".
%

The assumptions in Theorem~\ref{th:1.2} can be weakened. In fact, we have
the following result.
% %Theorem~\ref{th:1.3},
%hence it is a stronger result.

\begin{theorem}\label{th:1.3}
Let $ (M,\omega) $ be a closed and connected symplectic surface, and let $\mathcal{U}=\{U_i\}_{i\in I}$ be
a finite open cover of $M$ made by topological discs and with $\kappa(\mathcal{U})\ge 3$. Then
there exists an absolute constant $c>0$ such that
$$
{\rm pb}(\mathcal{U})\ge \frac{|\mathcal{J}(\mathcal{U})|c}{{\rm Area}(M)}\quad\hbox{and}\quad
{\rm pb}(\mathcal{U})\ge \frac{c}{\min_{U\in\mathcal{J}(\mathcal{U})}{\rm Area}(U)}.
$$
\end{theorem}

%For the sphere this result is closely related to \cite[Theorem~1.4.8]{Pay19}, but  conditions of both theorems
%be contained each other.

%If the condition ``$\mathcal{J}(\mathcal{U})\ne\emptyset$" in Theorem~\ref{th:1.3} is cast away,
As an improvement of a special case of  \cite[Theorem~1.5 or $1.5^\prime$]{BLT17}, we have
the following analogue of Theorem~\ref{th:1.2.1}
(\hbox{\cite[Theorem~$1.5^\prime$]{BLT17}}).

\begin{theorem}\label{th:1.4}
Let $(M,\omega)$ be a closed connected symplectic surface and let
$\mathcal{U}=\{U_i\}_{i=1}^{N}$ be a finite open cover consisting of topological discs. If $\kappa(\mathcal{U})\ge 3$, then for any partition of unity subordinate to $\mathcal{U}$, $\mathcal{F}:=\{f_i\}_{i=1}^{N}$, there holds
$$
\int_M\sum_{i,j=1}^N|\{f_i,f_j\}|\omega\ge 2.
$$
Moreover, there exists an absolute constant $C>0$ such that
%From this and Lemma~\ref{le:2.1} in the next section, one can get
\begin{equation}\label{e:lower-bound}
{\rm pb}(\mathcal{U})\ge \frac{C}{{\rm Area}(M)}.
\end{equation}
\end{theorem}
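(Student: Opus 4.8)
The plan is to reduce Theorem~\ref{th:1.4} to Theorem~\ref{th:1.2} by passing from $\mathcal{U}$ to a suitable subcover and then perturbing its elements so that the area hypothesis ``$\mathrm{Area}(U_i)<\mathrm{Area}(M)/2$'' becomes available. First I would choose a smallest subcover $\mathcal{U}_0\subset\mathcal{U}$, so $|\mathcal{U}_0|=\kappa(\mathcal{U})\ge 3$. Every element of $\mathcal{U}_0$ is essential \emph{in} $\mathcal{U}_0$, hence $\mathcal{J}(\mathcal{U}_0)=\mathcal{U}_0$ and in particular $\mathcal{J}(\mathcal{U}_0)\ne\emptyset$. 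The key geometric observation is that, on a closed surface, if $\mathcal{U}_0$ is a minimal cover by topological discs with $|\mathcal{U}_0|\ge 3$, then no $U\in\mathcal{U}_0$ can have $\mathrm{Area}(U)\ge\mathrm{Area}(M)/2$ after an arbitrarily small shrinking; more precisely, I would argue that one may replace each $U_i\in\mathcal{U}_0$ by a slightly smaller topological disc $U_i'\Subset U_i$ so that $\{U_i'\}$ is still a cover, and that this can be arranged with each $\mathrm{Area}(U_i')<\mathrm{Area}(M)/2$. If some $U_i$ genuinely has area $\ge\mathrm{Area}(M)/2$, then because $U_i$ is a disc its complement $M\setminus U_i$ is covered by the remaining $\kappa(\mathcal{U})-1\ge 2$ discs; I would use the minimality together with a topological/measure-theoretic argument (a disc of area $>\mathrm{Area}(M)/2$ on a surface of small genus forces its complement to be ``thin'' and hence re-coverable in a way contradicting smallestness, or else permits absorbing $U_i$ into the complement's cover) to rule this out, so that in fact $\mathrm{Area}(U)<\mathrm{Area}(M)/2$ for every $U\in\mathcal{U}_0$ to begin with.

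Granting that, Theorem~\ref{th:1.2} applied to $\mathcal{U}_0$ gives $pb(\mathcal{U}_0)\ge c/\min_{U\in\mathcal{U}_0}\mathrm{Area}(U)\ge c/\mathrm{Area}(M)$ and, since $|\mathcal{J}(\mathcal{U}_0)|=\kappa(\mathcal{U})\ge 3$, also $pb(\mathcal{U}_0)\ge 3c/\mathrm{Area}(M)$. The next step is to transfer this to the integral statement for an \emph{arbitrary} partition of unity $\mathcal{F}=\{f_i\}_{i=1}^N$ subordinate to the full cover $\mathcal{U}$. I would collapse $\mathcal{F}$ to a partition of unity subordinate to $\mathcal{U}_0$ by choosing, for each index $k$ with $U_k\notin\mathcal{U}_0$, an index $\sigma(k)$ with $U_k\subset U_{\sigma(k)}$ and $U_{\sigma(k)}\in\mathcal{U}_0$, and setting $g_j=\sum_{\sigma(k)=j} f_k + f_j\cdot[\,U_j\in\mathcal{U}_0\,]$ (grouping the ``non-essential'' bump functions onto essential ones that contain their supports). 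This is the standard merging procedure; it yields a genuine partition of unity $\{g_j\}$ subordinate to $\mathcal{U}_0$, and crucially it only combines functions \emph{additively}, so by bilinearity of the Poisson bracket and the triangle inequality $\sum_{j,j'}|\{g_j,g_{j'}\}|\le \sum_{i,i'}|\{f_i,f_{i'}\}|$ pointwise. Hence $\int_M\sum_{i,i'}|\{f_i,f_{i'}\}|\,\omega\ge\int_M\sum_{j,j'}|\{g_j,g_{j'}\}|\,\omega$.

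It then remains to bound the right-hand side from below by an absolute constant, which is where I would invoke the normalization built into Theorem~\ref{th:1.2} (equivalently, the reformulation in Theorem~\ref{th:1.2.1} / the ideas behind \cite{BLT17}): with $\kappa(\mathcal{U})\ge 3$ essential discs of area $<\mathrm{Area}(M)/2$, the argument of Buhovsky--Logunov--Tanny produces a lower bound on $\int_M\sum|\{g_j,g_{j'}\}|\,\omega$ that does \emph{not} degenerate as $\mathrm{Area}(M)\to\infty$, because one can first rescale $\omega$ so that $\mathrm{Area}(M)=1$ (Poisson brackets and the integral are homogeneous of matching degree under rescaling, so the dimensionless quantity $\int_M\sum|\{g_j,g_{j'}\}|\,\omega$ is scale-invariant) and then apply Theorem~\ref{th:1.2} with the normalized area to get $\ge c'$ for an absolute $c'>0$; choosing the constant appropriately gives $\ge 1$. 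Finally, $pb(\mathcal{U})=\inf_{\mathcal{F}}\nu_c(\mathcal{F})\ge \tfrac{1}{N^2}\inf_{\mathcal{F}}\int_M\sum_{i,j}|\{f_i,f_j\}|\,\omega$ is weaker than what one wants; instead I would derive \eqref{e:lower-bound} directly, noting $\int_M\sum_{i,j}|\{f_i,f_j\}|\,\omega\le N^2\,\mathrm{Area}(M)\cdot\nu_c(\mathcal{F})$ is the wrong direction, so the clean route is to observe that the passage to $\mathcal{U}_0$ makes $N=\kappa(\mathcal{U})$ bounded in the relevant estimate and then $pb(\mathcal{U})\ge pb(\mathcal{U}_0)\ge c/\mathrm{Area}(M)$ follows from Theorem~\ref{th:1.3} applied to $\mathcal{U}_0$ (whose hypotheses $\mathcal{J}(\mathcal{U}_0)\ne\emptyset$, $\kappa(\mathcal{U}_0)\ge 3$ we have verified), since $pb$ is monotone under passing to subcovers in the sense that any partition of unity subordinate to $\mathcal{U}$ merges to one subordinate to $\mathcal{U}_0$ with no larger $\nu_c$.

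\textbf{Main obstacle.} I expect the crux to be the two ``structural'' reductions and their interaction: (i) showing that a smallest subcover by topological discs with $\kappa\ge 3$ automatically (or after harmless shrinking) has every element of area $<\mathrm{Area}(M)/2$ — this needs a careful topological argument on surfaces ruling out a ``dominant'' disc, and is exactly the point Example~\ref{ex:converse} warns is subtle (there $\mathcal{U}$ is \emph{not} minimal-by-area but still has $\kappa=3$, so one must genuinely use minimality of $\mathcal{U}_0$, not just $\kappa\ge 3$); and (ii) verifying that the merging map on partitions of unity is compatible with \emph{both} the $\nu_c$-functional (for the $pb$ statement) and the $L^1$-functional $\int\sum|\{f_i,f_j\}|$ (for the integral statement), with absolute constants that survive the area-rescaling. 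Everything else — bilinearity of $\{\cdot,\cdot\}$, the triangle inequality, scale-invariance of the dimensionless integral — is routine.
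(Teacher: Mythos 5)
Your reduction has two fatal gaps, and the second half of your plan hinges on both. First, the key geometric claim (i) is false, and the paper's own Example~\ref{ex:converse} refutes it: there $\mathcal{U}=\{U_1,U_2,U_3\}$ \emph{is} a smallest subcover of itself ($\kappa(\mathcal{U})=3=|\mathcal{U}|$, every element essential), yet $U_2\cup U_3=\{x_3>0\}$, so any $U_1'\subset U_1$ that still completes the cover must contain the closed hemisphere $\{x_3\le 0\}$, which has area exactly ${\rm Area}(M)/2$; being open, $U_1'$ then has area strictly greater than ${\rm Area}(M)/2$. So minimality does not rule out a ``dominant'' disc, no shrinking can restore the hypothesis ${\rm Area}(U_i)<{\rm Area}(M)/2$ of Theorem~\ref{th:1.2}, and your reading of the example (that it is ``not minimal'') is incorrect. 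Second, the merging step is unjustified: for $U_k\notin\mathcal{U}_0$ there need not exist any $U_{\sigma(k)}\in\mathcal{U}_0$ with $U_k\subset U_{\sigma(k)}$ (nor with ${\rm supp}\,f_k\subset U_{\sigma(k)}$), since a subcover covers $M$ without containing the discarded sets individually. Without such a $\sigma$, the functions $g_j$ are not subordinate to $\mathcal{U}_0$, and the monotonicity you need, $pb(\mathcal{U})\ge pb(\mathcal{U}_0)$, fails in general --- the trivial inequality goes the other way, $pb(\mathcal{U})\le pb(\mathcal{U}_0)$, because partitions subordinate to $\mathcal{U}_0$ form a subset of those subordinate to $\mathcal{U}$. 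For the same reason the integral inequality for the \emph{original} partition $\{f_i\}_{i=1}^N$ (which is what the theorem asserts, and which must also cover the case $\mathcal{J}(\mathcal{U})=\emptyset$) cannot be obtained by collapsing onto a smallest subcover.

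The paper's proof takes an entirely different route, adapting the Buhovsky--Logunov--Tanny scheme rather than reducing to Theorem~\ref{th:1.2} or~\ref{th:1.3}: for a large parameter $L$ one forms covers $\mathcal{U}^s$, $\mathcal{U}^t$ by superlevel sets $\{f_i>s_{i,k}\}$ with random levels $s_{i,k}\in[\tfrac{k-1}{L},\tfrac{k}{L}]$, shows via Sard's theorem that for almost all levels these covers are good and in generic position, and proves a combinatorial lower bound (Lemma~\ref{le:3.4}, which is where $\kappa(\mathcal{U})\ge 3$ and the topological-disc hypothesis enter) of $(L+1-|I|)^2$ on the number of boundary intersection points. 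Averaging over the levels and converting intersection counts into $\int_M|\{f_i,f_j\}|\,\omega$ via the coarea-type Lemma~\ref{le:2.2}, then letting $L\to\infty$, yields $\int_M\sum_{i,j}|\{f_i,f_j\}|\,\omega\ge 1$ directly for the given partition; the bound \eqref{e:lower-bound} then follows from Lemma~\ref{le:2.1} together with $\max_M\sum_{i,j}|\{f_i,f_j\}|\ge \frac{1}{{\rm Area}(M)}\int_M\sum_{i,j}|\{f_i,f_j\}|\,\omega$. If you want to repair your approach you would have to replace both the area reduction and the merging step by something of this nature; as written, the proposal does not prove the theorem.
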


\begin{remark}
{\rm Since the Lusternik-Schnirelmann category of every closed surface of genus $g \ge 1$ is at least $3$,
the condition $\kappa(\mathcal{U})\ge 3$ in Theorem~\ref{th:1.4} is automatically satisfied in such closed surfaces.
Hence (\ref{e:lower-bound}) holds. In this case (\ref{e:lower-bound}) may also follow from
\cite[Theorem~1.4.14]{Pay19} as pointed out by Payette \cite{Pay19+}. J. Payette called (\ref{e:lower-bound})
 the {\it weak Poisson bracket conjecture}
}(see \cite{Pay19}).
\end{remark}

Clearly, if an open cover $\mathcal{U}=\{U_i\}_{i=1}^{N}$  of  a closed connected symplectic surface
 $(M,\omega)$  consists of topological discs, then $\kappa(\mathcal{U})\ge 2$. We claim that ${\rm pb}(\mathcal{U})=0$ if  $\kappa(\mathcal{U})<3$.
In fact, in this case, any smallest subcover $\mathcal{U}_0$  of $\mathcal{U}$ only contains two sets, saying
$\mathcal{U}_0=\{U_1,U_2\}$, and any smooth partition of unity  $\mathcal{F}_0=\{f_1,f_2\}$ subordinated to $\mathcal{U}_0$ satisfies $\{f_1,f_2\}=\{f_1,1-f_1\}=0$ and so
  $0={\rm pb}(\mathcal{U}_0)\ge {\rm pb}(\mathcal{U})\ge 0$.
From these and Theorem~\ref{th:1.4}, we arrive at:

\begin{corollary}\label{cor:1.10}
Let $(M,\omega)$ be a closed connected symplectic surface and $\mathcal{U}=\{U_i\}_{i=1}^{N}$ be a open cover of $M$ consisting of topological discs.
Then ${\rm pb}(\mathcal{U})>0$ if and only if $\kappa(\mathcal{U})\ge 3$.
\end{corollary}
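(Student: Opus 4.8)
The plan is to prove Corollary~\ref{cor:1.10} by assembling the two implications that have just been set up in the paragraph preceding it. For the direction ``$\kappa(\mathcal{U})<3 \Rightarrow pb(\mathcal{U})=0$'', I would simply record the argument already sketched: since every element of $\mathcal{U}$ is a topological disc and $M$ is closed, a single disc cannot cover $M$, so $\kappa(\mathcal{U})\ge 2$; combined with $\kappa(\mathcal{U})<3$ this forces $\kappa(\mathcal{U})=2$. Pick a smallest subcover $\mathcal{U}_0=\{U_1,U_2\}$, take any smooth partition of unity $\{f_1,f_2\}$ subordinate to it, note $f_2=1-f_1$ so that $\{f_1,f_2\}=\{f_1,1-f_1\}=0$, whence $\nu_c(\{f_1,f_2\})=0$ and $pb(\mathcal{U}_0)=0$. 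Finally, since any partition of unity subordinate to $\mathcal{U}_0$ is also one subordinate to $\mathcal{U}$ (by extending with zero functions), $pb(\mathcal{U})\le pb(\mathcal{U}_0)=0$, and $pb(\mathcal{U})\ge 0$ trivially, so $pb(\mathcal{U})=0$.

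For the converse direction ``$\kappa(\mathcal{U})\ge 3 \Rightarrow pb(\mathcal{U})>0$'', I would invoke Theorem~\ref{th:1.4} directly: under the hypothesis that $\mathcal{U}$ consists of topological discs and $\kappa(\mathcal{U})\ge 3$, that theorem gives $pb(\mathcal{U})\ge c/{\rm Area}(M,\omega)>0$ for an absolute constant $c>0$. Putting the two implications together yields the stated equivalence. I would phrase the corollary's proof in two short paragraphs, one per implication, being careful to note in the first that $pb$ is monotone under passing to subcovers (which is immediate from the definition of $pb$ as an infimum over a larger family of partitions of unity when the cover is refined, here in the degenerate sense of adding sets), and in the second that we are merely quoting Theorem~\ref{th:1.4}.

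There is essentially no obstacle here: Corollary~\ref{cor:1.10} is a formal consequence of Theorem~\ref{th:1.4} together with the elementary two-disc observation, and all the substantive work lives in the proof of Theorem~\ref{th:1.4} (which will presumably go through Theorem~\ref{th:1.3} or an adaptation of the Buhovsky--Logunov--Tanny machinery, handling the passage from ``area less than ${\rm Area}(M)/2$'' to the weaker ``$\kappa(\mathcal{U})\ge 3$'' hypothesis). The one point worth double-checking is the claim $\kappa(\mathcal{U})\ge 2$: this uses that a topological disc in a closed connected surface is a proper open subset and in fact cannot cover $M$ (since its closure, a closed disc, is not all of $M$), so no single element of $\mathcal{U}$ suffices and $\kappa(\mathcal{U})\ge 2$ always holds for such covers. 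Everything else is bookkeeping.

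Accordingly the writeup is short. I expect the proof to read: ``If $\kappa(\mathcal{U})\ge 3$, then $pb(\mathcal{U})>0$ by Theorem~\ref{th:1.4}. Conversely, if $\kappa(\mathcal{U})<3$, then as observed above $\kappa(\mathcal{U})=2$, and a smallest subcover consists of two sets supporting a partition of unity $\{f_1,1-f_1\}$ with vanishing Poisson bracket, so $pb(\mathcal{U})\le pb(\mathcal{U}_0)=0$, giving $pb(\mathcal{U})=0$. This proves the equivalence.''
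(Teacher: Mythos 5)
Your proposal is correct and follows essentially the same route as the paper: the forward implication is exactly an application of Theorem~\ref{th:1.4}, and the reverse implication is the same two-set subcover argument ($\{f_1,1-f_1\}$ Poisson-commutes, so $0=pb(\mathcal{U}_0)\ge pb(\mathcal{U})\ge 0$ by monotonicity of $pb$ under enlarging the cover) that the paper records in the paragraph preceding the corollary.
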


Under the assumptions of Corollary~\ref{cor:1.2.2} (\hbox{\cite[Corollary~1.9]{BLT17}}), the inequality in (\ref{e:lower-bound}) may follow from (\ref{eq:pb-bound}), since $e(\mathcal{U})\le{\rm Area}(M,\omega)/2$.
Observe that the assumptions of Theorem~\ref{th:1.4} do not imply the assumptions of Corollary~\ref{cor:1.2.2}, and vice-versa.
Actually, from Theorem~\ref{th:1.4} we can also derive:

\begin{corollary}\label{cor:1.11}
	Let $(M,\omega) $ be a closed and connected symplectic surface and let
$\mathcal{U}=\{U_i\}^N_{i=1}$ be a finite connected open
	 cover of $M$ such that the closure of each $U_i$ is
 displaceable. Then for an absolute constant $c>0$, it holds that
	\begin{equation}\label{eq:pb-bound1}
	{\rm pb}(\mathcal{U})\geq \frac{c}{{\rm Area}(M,\omega)}.
	\end{equation}
\end{corollary}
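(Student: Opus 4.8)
The goal is to deduce Corollary~\ref{cor:1.11} from Theorem~\ref{th:1.4}. The plan is to reduce the general connected open cover whose elements have displaceable closure to a cover by topological discs so that Theorem~\ref{th:1.4} applies, at the cost of only a bounded multiplicative constant in the Poisson bracket invariant. First I would recall the standard fact (used already in \cite{BLT17}) that on a surface every connected open set $U$ with displaceable closure is contained in a slightly larger open set which is still displaceable, and that one can cover such a $U$ by finitely many embedded open topological discs each of which is again displaceable; indeed displaceability passes to subsets, so every disc in such a refinement has displaceable closure. Refining each $U_i$ in $\mathcal{U}$ this way yields a finite open cover $\mathcal{V}=\{V_k\}$ of $M$ by topological discs with $e(V_k)\le e(U_i)$ whenever $V_k\subset U_i$; in particular, since each $V_k$ has displaceable closure, $\mathrm{Area}(V_k)<\mathrm{Area}(M)$ and, more to the point, no single element of $\mathcal{V}$ can cover $M$ minus a point, so $\kappa(\mathcal{V})\ge 3$ (two topological discs cannot cover a closed surface; if two discs of displaceable closure covered $M$, one of them would have area $\ge\mathrm{Area}(M)/2$, but a displaceable set on a surface has area $<\mathrm{Area}(M)/2$, a contradiction with the standard displacement energy bound). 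Hence Theorem~\ref{th:1.4} gives $pb(\mathcal{V})\ge c/\mathrm{Area}(M,\omega)$.

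The remaining point is to transfer this lower bound from the refinement $\mathcal{V}$ back to the original cover $\mathcal{U}$. Here I would use the standard monotonicity/domination property of the Poisson bracket invariant: if $\mathcal{V}$ refines $\mathcal{U}$, then any partition of unity $\{f_i\}$ subordinate to $\mathcal{U}$ can be compared to partitions subordinate to $\mathcal{V}$. More precisely, choose a map assigning to each $V_k$ an index $i(k)$ with $V_k\subset U_{i(k)}$, let $\{\rho_k\}$ be a partition of unity subordinate to $\mathcal{V}$, and given $\{f_i\}$ subordinate to $\mathcal{U}$ form $g_k:=\rho_k\cdot\big(f_{i(k)}/\sum_{l:\,i(l)=i(k)}\rho_l\big)$ on the appropriate set (this is the usual way to produce a partition subordinate to $\mathcal{V}$ whose sums over fibers of $i(\cdot)$ reproduce the $f_i$); then $\sum_{k:\,i(k)=i}g_k=f_i$, so any $a,b\in[-1,1]^N$ pull back to coefficient vectors in $[-1,1]$ for the $g_k$'s and $\{\sum a_if_i,\sum b_jf_j\}=\{\sum \tilde a_k g_k,\sum\tilde b_l g_l\}$. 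This yields $pb(\mathcal{U})\ge pb(\mathcal{V})$ and hence (\ref{eq:pb-bound1}).

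Alternatively, and perhaps more cleanly, I would not pass through $pb$ of the refinement at all but argue directly at the level of the integral estimate in Theorem~\ref{th:1.4}: given $\{f_i\}$ subordinate to $\mathcal{U}$, build as above a partition $\{g_k\}$ subordinate to the disc-refinement $\mathcal{V}$ with $\sum_{k\in i^{-1}(i)}g_k=f_i$; Theorem~\ref{th:1.4} gives $\int_M\sum_{k,l}|\{g_k,g_l\}|\omega\ge 1$, and since each $\{f_i,f_j\}=\sum_{k\in i^{-1}(i)}\sum_{l\in i^{-1}(j)}\{g_k,g_l\}$ the triangle inequality gives $\int_M\sum_{i,j}|\{f_i,f_j\}|\omega\ge\int_M\sum_{k,l}|\{g_k,g_l\}|\omega\ge 1$... wait, the inequality goes the wrong way unless each pair $(k,l)$ appears at most once, which it does since the fibers $i^{-1}(i)$ partition the index set of $\mathcal{V}$; so indeed $\sum_{k,l}|\{g_k,g_l\}|\ge\sum_{i,j}|\sum_{k,l}\{g_k,g_l\}|=\sum_{i,j}|\{f_i,f_j\}|$ and the estimate descends. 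Combining with the elementary bound $\|\cdot\|_{L^\infty}\ge \|\cdot\|_{L^1}/\mathrm{Area}(M)$ and taking the infimum over $\{f_i\}$ produces (\ref{eq:pb-bound1}) with the same constant as in (\ref{e:lower-bound}).

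The main obstacle, and the only place requiring care, is the construction of the disc-refinement together with a compatible partition of unity: one must ensure that each chosen disc $V_k$ genuinely embeds as a topological disc \emph{and} has area (equivalently displacement energy) small enough that $\kappa(\mathcal{V})\ge3$ is forced, and one must check that the fiberwise regrouping $g_k$ is smooth and nonnegative where the denominator $\sum_{l\in i^{-1}(i)}\rho_l$ could vanish — this is handled by the usual trick of choosing the $\rho_l$ so that $\sum_{l\in i^{-1}(i)}\rho_l$ is supported in a neighborhood where $f_i$ is defined and extending $g_k$ by zero, exactly as in the proof that refinements do not increase $pb$. None of this is deep, but it is the step where the hypothesis ``closure of each $U_i$ is displaceable'' is actually used (to get both the disc refinement and, via the surface displacement-energy inequality, $\kappa(\mathcal{V})\ge3$); everything after that is a formal consequence of Theorem~\ref{th:1.4}.
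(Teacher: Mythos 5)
Your reduction goes in the wrong direction, and both of your attempts to transfer the bound from the disc refinement $\mathcal{V}$ back to $\mathcal{U}$ fail for the same reason. If $\mathcal{V}=\{V_k\}$ refines $\mathcal{U}$ (each $V_k\subset U_{i(k)}$), the correct monotonicity of $pb$ reads $pb(\mathcal{V})\ge pb(\mathcal{U})$, not the inequality you need: given a partition of unity $\{g_k\}$ subordinate to $\mathcal{V}$, grouping $f_i:=\sum_{i(k)=i}g_k$ gives one subordinate to $\mathcal{U}$ with $\nu_c(\{f_i\})\le\nu_c(\{g_k\})$, so refining can only increase $pb$ (e.g.\ a cover of $S^2$ by two large discs has $pb=0$, while a fine refinement by small discs has $pb>0$). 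Your splitting $g_k=\rho_k f_{i(k)}/\sum_{i(l)=i(k)}\rho_l$ only shows $\nu_c(\{g_k\})\ge\nu_c(\{f_i\})$, i.e.\ it bounds $\nu_c(\{f_i\})$ from \emph{above} by a quantity that Theorem~\ref{th:1.4} bounds from \emph{below}; no lower bound on $pb(\mathcal{U})$ follows. The same defect kills the ``cleaner'' integral version: as you yourself note, the triangle inequality gives $\sum_{i,j}|\{f_i,f_j\}|\le\sum_{k,l}|\{g_k,g_l\}|$, so the estimate $\int_M\sum_{k,l}|\{g_k,g_l\}|\,\omega\ge1$ for the refinement lower-bounds the \emph{larger} sum and does not ``descend'' to $\int_M\sum_{i,j}|\{f_i,f_j\}|\,\omega$.

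The fix, and the paper's argument, is to enlarge rather than refine. Since each $\bar U_i$ is compact, connected and displaceable, it is contained in an embedded open topological disc $V_i\subset M$ with ${\rm Area}(V_i)\le{\rm Area}(M)/2$ (this is \cite[Remark~1.2]{BLT17}; alternatively take a smooth compact neighborhood of $\bar U_i$ close enough to remain connected and displaceable and apply \cite[Theorem~4.1(2)-(i)]{Pay18}). Two open discs of area at most ${\rm Area}(M)/2$ cannot cover the closed connected surface $M$ (they would meet in a nonempty open set of positive area, forcing ${\rm Area}(M)<{\rm Area}(M)$), so $\kappa(\mathcal{V})\ge3$ and Theorem~\ref{th:1.4} gives $pb(\mathcal{V})\ge c/{\rm Area}(M,\omega)$. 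Since $U_i\subset V_i$ with the same index set, every partition of unity subordinate to $\mathcal{U}$ is already subordinate to $\mathcal{V}$, so $pb(\mathcal{U})\ge pb(\mathcal{V})$ is immediate from the definition, with no regrouping of functions at all. This enlargement step is where the hypothesis that the closures are displaceable is genuinely used, whereas in your proposal displaceability only enters to make the refined discs small, which then feeds an inequality pointing the wrong way.
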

\begin{proof}
Since $U_i$ is connected, so is the closure $\overline{U}_i$ of it.
Moreover, $\overline{U}_i$ is  displaceable, and compact due to the fact that $M$ is a closed surface.
By \cite[Remark~1.2]{BLT17},
    we have embedded open topological discs  $V_i\subset M $ with smooth boundaries
   such that  $U_i\subset V_i$ and ${\rm Area}(V_i) \le {\rm Area}(M)/2$, $i=1,\cdots,N$.
  In fact, by Exercise 3 in \cite[Chapter 3, \S1]{Hi94}, $\overline{U}_i$ has a closed neighborhood
  $W_i$ which is a smooth submanifold of $M$.
  Moreover, since $W_i$ can be required to be arbitrarily close $\overline{U}_i$,
 we may assume that it is displaceable, connected and has area less than ${\rm Area}(M)/2$.
 As Payette \cite{Pay19+} told us,  using the Jordan-Schoenflies theorem and  Exercise 3 in \cite[Chapter 3, \S1]{Hi94}
   may yield a  smoothly embedded closed disc $\overline{D}_i\supseteq W_i$ of area ${\rm Area}(\overline{D}_i)\le {\rm Area}(M)/2$
(see also \cite[Theorem~4.1(2)-(i)]{Pay18} for a detailed proof).
   Then $V_i:={\rm Int}(\overline{D}_i)$ satisfies our requirements.
  Clearly, $\mathcal{V}=\{V_i\}^N_{i=1}$ is an open cover of $M$, and $\kappa(\mathcal{V})\ge 3$. Hence
 (\ref{eq:pb-bound1}) follows from Theorem~\ref{th:1.4}, since
  ${\rm pb}(\mathcal{U})\ge {\rm pb}(\mathcal{V})$ by definition.
 \end{proof}

Finally, it should be pointed out that Example~\ref{ex:converse} satisfies the conditions of Theorem~\ref{th:1.4},
but does not those of Corollary~\ref{cor:1.2.2} (\hbox{\cite[Corollary~1.9]{BLT17}}).
Notice that  the cover $\mathcal{U}=\{U_1, U_2, U_3\}$ in Example~\ref{ex:converse}
is $3$-localized (\cite{Pay18, Pay19} for the precise definition) because
we can  choose three points $p_i\in S^2$, $i=1,2,3$ so that each $p_i$ only belongs to $U_i$ for $i=1,2,3$.
 For example, set
 $$
p_1=(0,0,-1),\quad p_2=(\sqrt{7}/4, 0, 3/4),\quad p_3=(-\sqrt{7}/4, 0, 3/4).
$$
Hence, the weak Poisson bracket conjecture for the cover $\mathcal{U}$ in Example~\ref{ex:converse}
can also follows from Theorem 1.4.12 of \cite{Pay19}. However, we can add an open topological disk
 $U_4\subset S^2$ to $\mathcal{U}$ so that the new cover $\mathcal{V}:=\{U_1,U_2,U_3,U_4\}$ of $S^2$
 also satisfies $\kappa(\mathcal{V})=3$, but cannot be $3$-localized.
Hence, the weak Poisson bracket conjecture for the cover $\mathcal{V}$ cannot follows from Theorem 1.4.12 of \cite{Pay19}.
Indeed,  by (\ref{e:converse}) it is direct to check that
\begin{equation}\label{e:converse+}
\left.\begin{array}{ll}
&U_1\cap U_2^c\cap U_3^c=\{(x_1,x_2,x_3)\in S^2\,|\,x_3\le 0\},\\
& U_2\cap U_1^c\cap U_3^c=\{(x_1,x_2,x_3)\in S^2\,|\,x_3\ge 1/2,\;x_1\ge 1/4\},\\
&U_3\cap U_1^c\cap U_2^c=\{(x_1,x_2,x_3)\in S^2\,|\,x_3\ge 1/2,\;x_1\le-1/4\}
\end{array}\right\}
\end{equation}
and hence that the complementary set of union of these three subsets in $S^2$ is
$$
\left\{(x_1,x_2,x_3)\in S^2\,\Big|\,0<x_3<\frac{1}{2},\;|x_1|\le 1\right\}\bigcup\left\{(x_1,x_2,x_3)\in S^2\,\Big|\,x_3\ge \frac{1}{2},\;|x_1|<\frac{1}{4}\right\}.
$$
Let $U_4\subset S^2$ be an open topological disk which contains $U_1\cap U_2^c\cap U_3^c$, $U_2\cap U_3^c\cap U_1^c$ and $U_3\cap U_1^c\cap U_2^c$.
Then it is easily checked that the new cover $\mathcal{V}:=\{U_1,U_2,U_3,U_4\}$ of $S^2$ cannot be $3$-localized
and therefore does not satisfy the conditions of \cite[Theorem 1.4.12]{Pay19}.
On the other hand, we can require $U_4$ to have area close to that of union of those three subsets in (\ref{e:converse+})
 so that $\{U_i, U_4\}$ cannot cover $S^2$ for each $i=1,2,3$.
 Then we still have $\kappa(\mathcal{V})=3$, and therefore $\mathcal{V}$ also satisfies the conditions of Theorem~\ref{th:1.4}.

 %Next, let we construct a new cover $\mathcal{V}=\{U_1,U_2,U_3,U_4\}$ of $S^2$ by adding an element $U_4$ to $\mathcal{U}$ in Example~\ref{ex:converse}, where $U_4$ is an open topological disk which contains $U_1\cap U_2^c\cap U_3^c$, $U_2\cap U_3^c\cap U_1^c$ and $U_3\cap U_1^c\cap U_2^c$. One can verify easily that $\mathcal{V}$ is not $3$-localized, but $\kappa(\mathcal{V})=3$ by choosing suitable $U_4$ (not too large).

\begin{remark}\label{rm:Ka}
{\rm
As with Buhovsky-Logunov-Tanny \cite{BLT17} and Payette \cite{Pay18,Pay19},
our results are estimation about lower bounds of the Poisson bracket invariant for open covers consisting of topological disks on closed connected surfaces.
On the other hand, Kawasaki \cite[Theorem~1.18]{Ka18} gave a lower bound of the Poisson bracket invariant on closed symplectic manifolds of arbitrary dimension.
As an application, \cite[Example~1.19]{Ka18} showed that the Poisson bracket invariant can be still
positive even if  some elements of an open cover are annuli.
}
\end{remark}

We shall complete proofs of Theorems~\ref{th:1.3},~\ref{th:1.4} in Sections~\ref{sec:2},~\ref{sec:3},
respectively.

\section{Proof of Theorem~\ref{th:1.3}}\label{sec:2}
\setcounter{equation}{0}

%The proof of Theorem~\ref{th:1.3} can be completed  with the similar way to \hbox{\cite[Theorem~1.7]{BLT17}}.

Our proof will be completed by the similar argument to \hbox{\cite[Theorem~1.7]{BLT17}}.

\begin{lemma}[\hbox{\cite[Lemma~1.3]{BLT17}}]\label{le:2.1}
Let $(M^{2n},\omega)$ be a closed symplectic manifold of dimension $2n$. Then there exists a constant $c(n)>0$ depending only on the dimension $2n$ of $M$ such that for every finite collection of smooth functions $\{f_i\}_{i=1}^{N}$ on $M$,
$$
\max_{a,b\in[-1,1]^N}\left\|\left\{\sum_{i=1}^{N}a_if_i,\sum_{j=1}^{N}b_jf_j\right\}\right\|\ge c(n)\cdot \max_{M}\sum_{i,j=1}^{N}|\{f_i,f_j\}|.
$$
\end{lemma}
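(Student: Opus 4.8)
The plan is to reduce the stated estimate, which is a global $C^0$-statement on $M$, to an elementary fact about a single antisymmetric matrix, and then to a convex-geometric estimate for zonotopes. First, by bilinearity of the Poisson bracket, $\{\sum_i a_if_i,\sum_j b_jf_j\}=\sum_{i,j}a_ib_j\{f_i,f_j\}$, so for any $a,b$ the $C^0$-norm of this function is at least its absolute value at any chosen point $x_0\in M$. At $x_0$, $\{f_i,f_j\}(x_0)$ depends bilinearly and antisymmetrically on the covectors $df_i(x_0),df_j(x_0)\in T^*_{x_0}M\cong\R^{2n}$ through the inverse of $\omega_{x_0}$, so the matrix $C:=(\{f_i,f_j\}(x_0))_{i,j}$ is antisymmetric with $\mathrm{rank}\,C\le 2n$. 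Choosing $x_0$ to maximize the continuous function $x\mapsto\sum_{i,j}|\{f_i,f_j\}(x)|$ over the compact $M$, it suffices to prove: there is $c(n)>0$ such that $\max_{a,b\in[-1,1]^N}|a^{\top}Cb|\ge c(n)\sum_{i,j}|C_{ij}|$ for every real antisymmetric matrix $C$ of rank at most $2n$.

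Next I would translate the right-hand quantity into a zonotope. If $\mathrm{rank}\,C=2k\le 2n$, the normal form of an antisymmetric bilinear form (viewing $C$ as a Gram matrix) produces vectors $v_1,\dots,v_N\in\R^{2k}$ which span $\R^{2k}$ and satisfy $C_{ij}=\Omega_0(v_i,v_j)$ for the standard symplectic form $\Omega_0$ on $\R^{2k}$. For $b\in[-1,1]^N$ one has $(Cb)_i=\Omega_0(v_i,w)$ with $w:=\sum_j b_jv_j$, hence $\|Cb\|_1=N(w)$, where $N(w):=\sum_i|\Omega_0(v_i,w)|$ is a norm on $\R^{2k}$ (it vanishes only at $0$ since the $v_i$ span). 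Maximizing $|a^{\top}Cb|$ first over $a$ (which gives $\|Cb\|_1$) and then over $b$, and observing that $\{\sum_j b_jv_j : b\in[-1,1]^N\}$ is exactly the zonotope $Z:=\sum_{i=1}^N[-v_i,v_i]$, we get $\max_{a,b\in[-1,1]^N}|a^{\top}Cb|=\max_{w\in Z}N(w)$, while $\sum_{i,j}|C_{ij}|=\sum_i N(v_i)$. So it remains to prove the purely convex-geometric statement: for every norm $N$ on $\R^d$ and $Z=\sum_i[-v_i,v_i]$, one has $\max_{w\in Z}N(w)\ge c(d)\sum_i N(v_i)$; applied with $d=2k\le 2n$ this finishes the proof.

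For the zonotope estimate I would first pass to a Euclidean norm via John's ellipsoid theorem: every norm $N$ on $\R^d$ satisfies $\tfrac{1}{\sqrt d}\|\cdot\|_E\le N(\cdot)\le\|\cdot\|_E$ for a suitable Euclidean norm $\|\cdot\|_E$, so up to the factor $\sqrt d$ we may assume $N=\|\cdot\|_E$. For a Euclidean $N$, fix an orthonormal basis $e_1,\dots,e_d$; since the support function of a zonotope is $h_Z(\xi)=\sum_i|\langle\xi,v_i\rangle|$, we get $\sum_{l=1}^d h_Z(e_l)=\sum_i\|v_i\|_1\ge\sum_i\|v_i\|_2=\sum_i N(v_i)$, and keeping the largest term yields $\max_{w\in Z}N(w)\ge\max_l h_Z(e_l)\ge\tfrac{1}{d}\sum_i N(v_i)$. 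Hence $c(d)=d^{-3/2}$ works, and one may take $c(n)=(2n)^{-3/2}$.

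The step I expect to be the main obstacle is the passage in the second paragraph, precisely because the naive alternative fails: one is tempted to split $\Omega_0$ into its $k$ coordinate $2$-planes and bound the bracket plane by plane, but $C\mapsto\max_{a,b}|a^{\top}Cb|$ is \emph{not} subadditive — for a fixed choice of signs the contributions of different $2$-planes can cancel — so a plane-by-plane estimate never closes. Folding all the cross terms into the single norm $N$ on $\R^{2k}$ and exploiting zonotope/support-function duality is exactly what lets one reach a constant depending only on the dimension.
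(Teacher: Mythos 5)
Your argument is correct, but note that the paper does not prove this statement at all: Lemma~\ref{le:2.1} is imported verbatim from \cite[Lemma~1.3]{BLT17} and used as a black box, so there is no in-paper proof to match yours against. What you give is a genuine, self-contained proof: the reduction to a single point $x_0$ maximizing $\sum_{i,j}|\{f_i,f_j\}|$ is sound; the matrix $C_{ij}=\{f_i,f_j\}(x_0)$ is indeed skew of rank $2k\le 2n$, and the normal form $C_{ij}=\Omega_0(v_i,v_j)$ with $v_1,\dots,v_N$ spanning $\R^{2k}$ makes $N(w)=\sum_i|\Omega_0(v_i,w)|$ a norm; the identities $\max_{a,b\in[-1,1]^N}|a^{\top}Cb|=\max_{w\in Z}N(w)$ and $\sum_{i,j}|C_{ij}|=\sum_iN(v_i)$ are correct; and the zonotope estimate via the support function $h_Z(\xi)=\sum_i|\langle\xi,v_i\rangle|$ together with John's ellipsoid gives $\max_{w\in Z}N(w)\ge d^{-3/2}\sum_iN(v_i)$, hence the explicit constant $c(n)=(2n)^{-3/2}$ (monotone in $d$, so the rank-$2k$ case is covered). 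Compared with simply citing \cite{BLT17}, your route buys a short elementary proof with an explicit dimensional constant, and your closing remark is apt: a naive Darboux-plane-by-plane splitting cannot work because $\max_{a,b}|a^{\top}Cb|$ does not split over a sum of rank-two pieces, so folding the skew pairing into a single norm on the image space and using zonotope--support-function duality is exactly the right mechanism.
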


%Buhovsky, Logunov and Tanny prove this lemma in \cite{BLT} Appendix A. We also need next lemma:

\begin{lemma}[\hbox{\cite[Lemma~2.1]{BLT17}}]\label{le:2.2}
Let $M$ be a closed connected surface and $\omega$ be an area form. For given two smooth functions $f,g:M\rightarrow\mathbb{R}$, let $\Phi:=(f,g):M\rightarrow\mathbb{R}^2$.
Consider  the function $K:\mathbb{R}^2\rightarrow\mathbb{R}\cup\{\infty\}$ defined by
$$
K(s,t):=\#(f^{-1}(s)\cap g^{-1}(t))=\#\Phi^{-1}(s,t).
$$
Then for any Lebesgue measurable set $\Omega\subset\mathbb{R}^2$,
$$
\int_{\Phi^{-1}(\Omega)}|\{f,g\}|\omega=\int_{\Omega}K(s,t)dsdt.
$$
\end{lemma}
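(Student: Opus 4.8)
The plan is to reduce the statement to the classical coarea formula (or, what amounts to the same here, Sard's theorem plus the area formula) applied to the smooth map $\Phi=(f,g):M\to\mathbb{R}^2$. First I would observe that the Jacobian of $\Phi$ at a point $x$ is, up to the volume normalization coming from $\omega$, exactly $|\{f,g\}(x)|$: writing things in a Darboux chart, $df\wedge dg=\{f,g\}\,\omega^n/\text{(const)}$ in dimension $2n$, and in the surface case $df\wedge dg=\{f,g\}\,\omega$ on the nose, so the "Jacobian times volume form" that appears in the area formula for $\Phi$ is precisely $|\{f,g\}|\,\omega$. Hence for any Lebesgue measurable $\Omega\subset\mathbb{R}^2$ the area formula gives
$$
\int_{\Phi^{-1}(\Omega)}|\{f,g\}|\,\omega=\int_{\mathbb{R}^2}\#\bigl(\Phi^{-1}(s,t)\cap\Phi^{-1}(\Omega)\bigr)\,ds\,dt=\int_{\Omega}\#\Phi^{-1}(s,t)\,ds\,dt,
$$
which is the claim with $K(s,t)=\#\Phi^{-1}(s,t)$.

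The steps in order: (1) Localize. Cover $M$ by finitely many Darboux charts and a subordinate partition of unity, so it suffices to prove the identity for $\Phi$ restricted to an open subset $U\subset\mathbb{R}^{2n}$ with the standard symplectic form; the fiber-counting function is additive over a measurable partition of the domain, so summing the local identities recovers the global one. (2) Identify the Jacobian. On $U$ compute $d f\wedge dg=\sum_{k<\ell}\bigl(\partial_k f\,\partial_\ell g-\partial_\ell f\,\partial_k g\bigr)dx_k\wedge dx_\ell$ and check that when paired against $\omega^n/n!$ (i.e. the Liouville volume) the relevant combination is $\{f,g\}$; equivalently, the 2-dimensional Jacobian $J_2\Phi(x)=\sqrt{\det\bigl(D\Phi\,D\Phi^{T}\bigr)}$ and the "signed" pairing differ only by the orientation sign, and after taking absolute values one is left with $|\{f,g\}|$ times the Riemannian/Liouville volume density. (3) Apply the area formula for the Lipschitz (indeed smooth) map $\Phi:U\to\mathbb{R}^2$ between manifolds of dimension $2n\ge 2$ and $2$: for measurable $A\subset U$,
$$
\int_A J_2\Phi\,(\text{vol})=\int_{\mathbb{R}^2}\mathcal{H}^{0}\!\bigl(A\cap\Phi^{-1}(y)\bigr)\,dy,
$$
and $\mathcal{H}^0$ of a set is its cardinality. (4) Take $A=\Phi^{-1}(\Omega)$ and note $\mathcal{H}^0(\Phi^{-1}(\Omega)\cap\Phi^{-1}(y))$ equals $K(y)$ for $y\in\Omega$ and $0$ otherwise; both sides are then seen to be the advertised integrals, with the convention that $\infty$ is allowed as a value of $K$ (the integral over $\Omega$ simply diverges there, consistent with the left side).

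The main obstacle is purely bookkeeping rather than conceptual: matching the normalization constant between $df\wedge dg$, the Liouville volume $\omega^n/n!$, and the Euclidean Jacobian $J_2\Phi$ so that no stray factor survives, and making sure the argument is clean on the set where $\{f,g\}=0$ (there the integrand vanishes, and by Sard's theorem the image of the critical set has measure zero, so it contributes nothing to either side) and on the set where $K=\infty$. In the surface case $n=1$ that the lemma actually needs, all of this is transparent: $\Phi:M^2\to\mathbb{R}^2$, $\omega$ is an area form, $\Phi^*(ds\wedge dt)=\{f,g\}\,\omega$, so $|\Phi^*(ds\wedge dt)|=|\{f,g\}|\,\omega$, and the area formula for a map between surfaces gives the identity immediately; thus I would present the general-dimension statement but carry out the one-line verification in the $2$-dimensional case and remark that the higher-dimensional version follows the same way from the standard area formula.
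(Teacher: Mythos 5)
Your argument is essentially correct for the case the lemma is actually used in: the paper itself offers no proof of this statement (it is imported verbatim from \cite[Lemma~2.1]{BLT17}, where $M$ is a surface), and the standard derivation there is exactly your route: on a surface $\Phi^*(ds\wedge dt)=df\wedge dg=\pm\{f,g\}\,\omega$, so the Jacobian density of $\Phi=(f,g)$ with respect to the area form $\omega$ is $|\{f,g\}|$, and the area formula for the smooth (hence Lipschitz) map $\Phi:M^2\to\mathbb{R}^2$ applied to the measurable set $A=\Phi^{-1}(\Omega)$, together with the observation that $\#\bigl(\Phi^{-1}(\Omega)\cap\Phi^{-1}(s,t)\bigr)=K(s,t)$ for $(s,t)\in\Omega$ and $0$ otherwise, gives the identity; no separate treatment of the critical set or of $\{K=\infty\}$ is needed, since the Lipschitz area formula already covers them (and the left side is finite by compactness of $M$, so $K<\infty$ a.e.\ on $\Omega$).

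One caveat you should fix if you write this up: your side remarks about general dimension are wrong as stated. In dimension $2n>2$ one does not have $df\wedge dg=\{f,g\}\,\omega^n/\mathrm{const}$ (a $2$-form cannot be a multiple of a volume form), and the fiber-counting identity itself fails: the fibers $\Phi^{-1}(s,t)$ are generically $(2n-2)$-dimensional, so $K\equiv\infty$ on a set of positive measure while the left-hand side is finite; the correct higher-dimensional analogue is the coarea formula with $\mathcal{H}^{2n-2}$ of the fibers, which is a different statement. Since the lemma is only invoked for surfaces (both here and in \cite{BLT17}), restrict your proof to $\dim M=2$ and drop the claim that the general statement ``follows the same way.''
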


 Let $\mathcal{F}=\{f_i\}_{i=1}^{N}$ be a partition of unity
subordinated to a given open cover $\mathcal{U}=\{U_i\}^N_{i=1}$
of $M$.
For $t\ge 0$, we define
\begin{equation}\label{e:2.1}
  U_i(t):=\{x\in M\,|\,f_i>t\},\quad i=1,\cdots, N.
  \end{equation}

 Theorem~\ref{th:1.3} may be derived from Lemma~\ref{le:2.1} and the following:

\begin{lemma}\label{le:2.3}
Under the assumptions of Theorem~{\em \ref{th:1.3}},
 for any partition of unity $\mathcal{F}=\{f_i\}_{i=1}^N$ subordinated to $\mathcal{U}$, it holds that
$$\int_{M}\sum_{i,j=1}^{N}|\{f_i,f_j\}|\omega\ge |\mathcal{J}(\mathcal{U})|, $$
$$\max_M\sum_{i,j=1}^N|\{f_i,f_j\}|\ge\frac{1}{\min_{U_i\in\mathcal{J}(\mathcal{U})}{\rm Area}(U_i)},$$
where  the minimum of an empty set is understand to be infinity.
\end{lemma}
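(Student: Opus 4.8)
The plan is to reduce Lemma~\ref{le:2.3} to the key combinatorial-topological mechanism behind \cite[Theorem~1.7]{BLT17}: for an essential set $U_i\in\mathcal{J}(\mathcal{U})$ there is a point $x_i\in U_i$ covered by no other element of $\mathcal{U}$, so $f_i(x_i)=1$ and $f_i$ attains the full range $[0,1]$. The idea is to run, for each essential $U_i$, a one-variable version of the coarea/incidence argument of Lemma~\ref{le:2.2}: pair $f_i$ with the complementary function $g_i:=\sum_{k\ne i}f_k=1-f_i$. Since $\{f_i,g_i\}=0$ this pairing alone gives nothing, so instead the genuine step is to localize near $x_i$. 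Concretely, because $\kappa(\mathcal U)\ge 3$ and $U_i$ is a topological disc, the sublevel behaviour of $f_i$ forces the superlevel sets $U_i(t)$ from \eqref{e:2.1} to be ``cut'' from the rest of the cover in a way that produces, via Lemma~\ref{le:2.2} applied to suitable pairs $(f_i,f_j)$, an integral contribution of at least $1$ per essential set.

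First I would recall the profile-function device from \cite{BLT17}: set $\rho_i(t):={\rm Area}(U_i(t))$ for $t\in[0,1]$, a nonincreasing function with $\rho_i(0)\le{\rm Area}(U_i)$ and, for essential $U_i$, $\rho_i(t)>0$ for all $t<1$. The coarea formula (Lemma~\ref{le:2.2} with $g$ the second coordinate of a map whose first coordinate is $f_i$) lets one write $\int_M|\{f_i,f_j\}|\,\omega$ in terms of the number of connected components of level sets of $f_i$ that meet $U_j$. The heart of the matter, exactly as in \cite[Theorem~1.7]{BLT17}, is a topological lemma on surfaces: a level set $f_i^{-1}(s)$ inside the disc-like region where $f_i>0$ separates $M$, and the partition-of-unity constraint $\sum_k f_k\equiv 1$ forces, for a.e.\ $s$, the ``boundary'' of $U_i(s)$ to be swept by the other functions in a manner that each essential index contributes a full unit of area to $\int_{\mathbb R^2}K$. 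Summing over $i\in\mathcal{J}(\mathcal U)$ and using that distinct essential sets have disjoint ``private'' points $x_i$ so their contributions do not cancel, I would obtain $\int_M\sum_{i,j}|\{f_i,f_j\}|\,\omega\ge|\mathcal{J}(\mathcal U)|$.

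For the second inequality I would localize: fix the essential set $U_{i_0}$ achieving $\min_{U_i\in\mathcal{J}(\mathcal U)}{\rm Area}(U_i)$. Running the same argument but only tracking the pairs $(f_{i_0},f_j)$ and only over $\Phi_{i_0,j}^{-1}(\Omega)\subset U_{i_0}$, Lemma~\ref{le:2.2} gives $\int_{U_{i_0}}\sum_j|\{f_{i_0},f_j\}|\,\omega\ge 1$ (a full unit of incidence is forced inside $U_{i_0}$ because $f_{i_0}$ ranges over all of $[0,1]$ there). Then bound the integral by the sup times the measure: $1\le\int_{U_{i_0}}\sum_j|\{f_{i_0},f_j\}|\,\omega\le{\rm Area}(U_{i_0})\cdot\max_M\sum_{i,j}|\{f_i,f_j\}|$, which rearranges to the claimed bound. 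Finally, Theorem~\ref{th:1.3} itself follows by feeding these two estimates into Lemma~\ref{le:2.1}: the first inequality of Lemma~\ref{le:2.3} with $\max_M\ge\frac{1}{{\rm Area}(M)}\int_M$ gives $\nu_c(\mathcal F)\ge c(1)|\mathcal{J}(\mathcal U)|/{\rm Area}(M)$, and the second gives $\nu_c(\mathcal F)\ge c(1)/\min_{U\in\mathcal{J}(\mathcal U)}{\rm Area}(U)$; taking the infimum over $\mathcal F$ yields the stated lower bounds on $pb(\mathcal U)$ with $c=c(1)$.

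The main obstacle I anticipate is the surface-topology step that converts ``$f_i$ attains value $1$ at a private point of the essential set $U_i$'' into ``the level sets of $f_i$ force a unit of incidence area'': one must control how components of $f_i^{-1}(s)\cap U_i$ fit together inside a topological disc while respecting $\sum_k f_k\equiv 1$ and the condition $\kappa(\mathcal U)\ge 3$, and must argue that contributions from distinct essential sets genuinely add rather than overlap destructively. This is precisely where the hypothesis $\kappa(\mathcal U)\ge 3$ (replacing the area condition of Theorem~\ref{th:1.2}) has to be used — otherwise, as noted after Corollary~\ref{cor:1.10}, a two-element smallest subcover makes all brackets vanish. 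I would handle it by adapting, essentially verbatim, the combinatorial argument of \cite[\S{}Proof of Theorem~1.7]{BLT17}, checking that their use of the area bound ${\rm Area}(U_i)<{\rm Area}(M)/2$ can be weakened to $\kappa(\mathcal U)\ge 3$ plus $\mathcal{J}(\mathcal U)\ne\emptyset$, since the area bound was only invoked there to guarantee that no single disc covers ``too much,'' a role now played directly by the hypothesis that at least three discs are needed.
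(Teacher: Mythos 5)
Your proposal follows essentially the same route as the paper's proof: fix an essential $U_i$ with its ``private'' point where $f_i=1$, use Lemma~\ref{le:2.2} to count incidences of the level sets of $f_i$ with those of the other $f_j$ and extract a unit contribution $2\int_0^1(1-s)\,ds=1$ per essential set, and get the second bound from the fact that $\{f_{i},f_j\}$ is supported in $U_{i}$ so the unit integral is trapped in a set of area ${\rm Area}(U_{i})$. In particular, your key observation---that the area hypothesis in \cite{BLT17} is invoked only to rule out that the complement of the disc surrounding the private point fits inside another cover element, and that $\kappa(\mathcal{U})\ge 3$ does the same job (if $U_i^c\subset U_{j_0}$ then $\{U_i,U_{j_0}\}$ covers $M$)---is exactly the modification the paper makes in the proof of Lemma~\ref{le:2.3}.
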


\begin{proof}
This will be proved as in \hbox{\cite[Theorem~1.7]{BLT17}}.
 Since the
 open cover $\mathcal{U}=\{U_i\}_{i=1}^{N}$ contains at least one essential set, we can fix an essential set
 $U_i\in\mathcal{U}$.
 Then there exists a point $z_i\in U_i$ such that $z_i\notin U_j$
 for all $j\in\{1,\cdots,N\}\setminus\{i\}$. It follows that  $f_i(z_i)=1$, and hence $z_i\in U_i(s)$ for all $s\in(0,1)$.
 For every regular value $s\in(0,1)$ of $f_i$,  let $V_i(s)$ denote the connected component of $U_i(s)$ that contains $z_i$.
  Denote by  $\tilde{V}_i(s)$  the open topological disc of minimal area which contains $V_i(s)$ and is contained in $U_i$,
  and by  $\gamma^s:=\partial\tilde{V}_i(s)$.
  Then $\gamma^s$ is connected and is contained in  $\{f_i=s\}\cap \partial V_i(s)$. Fix $y^s\in\gamma^s$ and set
  $t^s_j:=f_j(y^s)\in\mathbb{R}$ for each $j\neq i$.

  For a fixed $j\neq i$, and let $t\in (0, t_j^s)$ be a regular value of $f_j$. Then $y^s\in U_j(t)$.
  Denote by $D_j(t)$ the closure of connected component of $U_j(t)$ that contains $y^s$, and
   by $\tilde{D}_j(t)$ the closed topological disc of minimal area which contains $D_j(t)$
   and is contained in $U_j$. Then $\partial\tilde{D}_j(t)\subset\partial D_j(t)\cap\{f_j=t\}$.

We claim that  $\gamma^s$ has at least two points of intersection with $\partial\tilde{D}_j(t)$.
In fact, since the interior of $\tilde{D}_j(t)$ intersects $\gamma^s$,
we need only to prove that $\gamma^s$ is not contained in $\tilde{D}_j(t)$.
  Equivalently, it suffice to show that both $\tilde{V}_i(s)$ and its complement $\tilde{V}_i(s)^c$ are not contained in $\tilde{D}_j(t)$. Since $D_j(t)\subset U_j$ and $\partial\tilde{D}_j(t)\subset \{f_j=t\}\subset U_j$,
we have $\tilde{D}_j(t)\subset U_j$. Moreover, $\tilde{V}_i(s)$ contains $z_i\notin U_j$, and so
  $\tilde{V}_i(s)\nsubseteq\tilde{D}_j(t)$. In order to get the desired claim, we also need to show that $\tilde{V}_i(s)^c\nsubseteq \tilde{D}_j(t)$.
  Suppose that  $U_i^c\subset U_{j_0}$ for some $j_0\in \{1,\cdots,N\}\setminus\{i\}$.
  Then $\{U_i,U_{j_0}\}$ covers $M$, and thus $\kappa(\mathcal{U})\leqslant 2$, which contradicts the assumption
   $\kappa(\mathcal{U})\ge 3$. Hence, $U_i^c\nsubseteq U_j$ for any $j$ with $j\neq i$. Note that
   $U_i^c\subset \tilde{V}_i(s)^c$ and $\tilde{D}_j(t)\subset U_j$. We conclude that $\tilde{V}_i(s)^c\nsubseteq \tilde{D}_j(t)$.

Denote by ${\rm cv}(f_k)$ the set of critical values of $f_k$, $k=1,\cdots,N$. Set
$$
\Omega_{ij}:=\{(s,t)\,|\,s\in(0,1)\setminus{\rm cv}(f_i)\;\&\; t\in(0,t^s_j)\setminus{\rm cv}(f_j)\}
$$
and $\Phi_{ij}=(f_i,f_j)$ and
$$
K_{ij}:\mathbb{R}^2\to\mathbb{R}\cup\{\infty\},\;(s,t)\mapsto\#\Phi_{ij}^{-1}(s,t)=\#(f_i^{-1}(s)\cap f_j^{-1}(t)).
$$
The above claim  implies that $K_{ij}(s,t)\ge 2 $ for any $(s,t)\in\Omega_{ij}$.
Applying Lemma~\ref{le:2.2} to $f_i$ and $f_j$ with $\Omega:=\Omega_{ij}$, we obtain
$$\begin{aligned}
\int_M|\{f_i,f_j\}|\omega &\ge\int_{\Phi_{ij}^{-1}(\Omega_{ij})}|\{f_i,f_j\}|\omega \\
&=\int_{\Omega_{ij}} K_{ij}(s,t)dsdt\\
&\ge\int_{(0,1)\setminus{\rm cv}(f_i)}ds\int_{(0,t^s_j)\setminus{\rm cv}(f_j)}2dt=2\int_{(0,1)\setminus{\rm cv}(f_i)}t^s_jds.\\
\end{aligned}
$$
Since $t_j^s=f_j(y^s)$, summing the above inequality over all $j\neq i$, we get
\begin{eqnarray*}
\sum_{j=1}^{N}\int_M|\{f_i,f_j\}|\omega &\ge &2\sum_{j\neq i}\int_{(0,1)\setminus{\rm cv}(f_i)}t^s_jds\\
&=&2\int_{(0,1)\setminus{\rm cv}(f_i)}\sum_{j\neq i}f_j(y^s)ds\\
&=&2\int_{(0,1)\setminus{\rm cv}(f_i)} (1-f_i(y^s))ds.
\end{eqnarray*}
For every $s\in (0,1)\setminus{\rm cv}(f_i)$,
since $y^s\in\gamma^s\subset\{f_i=s\}$, we have $f_i(y^s)=s$. Combining this and Morse-Sard theorem (\cite[page 69]{Hi94}), we get
$$
\sum_{j=1}^{N}\int_M|\{f_i,f_j\}|\omega\ge 2\int_{(0,1)\setminus{\rm cv}(f_i)}(1-s) ds=
 2\int^1_0(1-s) ds=1.
 $$
Summing over all $i$ satisfying $U_i\in\mathcal{J}(\mathcal{U})$, we obtain the first inequality in Lemma~\ref{le:2.3}.

In order to prove the second inequality, let us choose  $i_0$ such that $U_{i_0}\in\mathcal{J}(\mathcal{U})$
and ${\rm Area}(U_{i_0})={\min_{U_i\in\mathcal{J}(\mathcal{U})}{\rm Area}(U_i)}$.
Then, since the support of $f_{i_0}$ is contained $U_{i_0}$, we deduce that
\begin{eqnarray*}
1&\le&\sum_{j=1}^{N}\int_M|\{f_{i_0},f_j\}|\omega\\
&=&\sum_{j=1}^{N}\int_{U_{i_0}}|\{f_{i_0},f_j\}|\omega\\
&\le&\max_{U_{i_0}}\sum_{j=1}^{N}|\{f_{i_0},f_j\}|\cdot{\rm Area}(U_{i_0})\\
&\le&\max_M\sum_{i,j=1}^N|\{f_i,f_j\}| \cdot{\rm Area}(U_{i_0}) .
\end{eqnarray*}
\end{proof}

\begin{remark}\label{rm:2.4}
{\rm Actually, there is a question about integrability of the function
$(0,1)\setminus{\rm cv}(f_i)\ni s\mapsto t^s_j\in\mathbb{R}$ in the above proof.
This can be solved by suitable choices of $y^s$.
Recall that we first fix $y^s\in\gamma^s$ and set
  $t^s_j:=f_j(y^s)\in\mathbb{R}$ for each $j\neq i$.
  Since $(0,1)\setminus{\rm cv}(f_i)$ is union of at most countable open intervals contained in $(0,1)$,
  saying $(0,1)\setminus{\rm cv}(f_i)=\cup_{n=1}^\infty I_n$, for each non-empty $I_n$, we
  fix $s_n\in I_n$ and $y^{s_n}\in\gamma_{s_n}$.
  Let $\Phi^{s_n}$ be the gradient flow of $f_i$ through $y^{s_n}$ with respect to some fixed Riemannian metric on $M$.
For each $s\in I_n\setminus\{s_n\}$, let us choose $y^s$ to be the unique intersection point
of $f_i^{-1}(s)$ with ${\rm Im}(\Phi^{s_n})$. Clearly, $y^s$ smoothly depends on $s\in I_n\setminus\{s_n\}$.
Hence $I_n\setminus\{s_n\}\ni s\mapsto t^s_j:=f_j(y^s)\in\mathbb{R}$ is smooth for each $j\neq i$.
}
\end{remark}

\section{Proof of Theorem~\ref{th:1.4}}\label{sec:3}
\setcounter{equation}{0}

We begin with some definitions in \cite{BLT17}.

\begin{definition}[\hbox{\cite[Definition~3.3]{BLT17}}]\label{def:3.1}
{\rm A subset $U$ in a smooth closed connected surface $M$ is said to have a {\it piecewise smooth boundary}
 if $\partial U$ is a finite union of disjoint curves $\Gamma_1,\cdots,\Gamma_m$, such that each $\Gamma_j$ is a simple, closed, piecewise smooth and regular curve.}
\end{definition}

\begin{definition}[\hbox{\cite[Definition~3.3]{BLT17}}]\label{def:3.2}
{\rm Let $M$ be smooth closed connected surface and $\gamma_1,\cdots,\gamma_m \subset M$ be a finite collection of smooth regular curves with a finite number of intersection points. Denote $\Gamma=\gamma_1\cup\cdots\cup\gamma_m$. A connected component of the complement $M\setminus\Gamma$ is called a {\it face} of $\Gamma$. A point $v\in\Gamma$ that lies in the intersection of two (or more) curves is called a {\it vertex} of $\Gamma$.}
\end{definition}

\begin{definition}[cf. \hbox{\cite[Definition~3.1]{BLT17}}]\label{def:3.3}
{\rm A cover  $\mathcal{U}=\{U_i\}_{i\in I}$ of smooth closed connected surface $M$ is called {\it good} if  $\partial U_i$ and $\partial U_j$ intersect transversally for all $i,j\in I$ with $i\neq j$.
Two covers $\mathcal{U}=\{U_i\}_{i\in I},\mathcal{V}=\{V_j\}_{j\in J}$ of $M$ are said to be in {\it generic position} if
$$
\partial U_i \pitchfork \partial U_j\;\hbox{for all }(i,j)\in I\times J,\quad
\partial U_i\cap\partial U_k\cap\partial V_j=\emptyset\quad\hbox{and}\quad\partial U_i\cap\partial V_j\cap\partial V_l=\emptyset
$$
for all $i,k\in I, i\neq k$, and $j,l\in J, j\neq l$.}
\end{definition}

Note that \cite[Definition~3.1]{BLT17} contains no the requirement that $\partial U_i \pitchfork \partial U_j$ for all $(i,j)\in I\times J$.

The following lemma is a variant of \cite[Lemmas~3.2 and 3.4]{BLT17}.

\begin{lemma}\label{le:3.4}
Let $\mathcal{U}=\{U_i\}_{i\in I}$ and $\mathcal{V}=\{V_j\}_{j\in J}$ be good finite open covers of a closed connected surface $M$.
Assume
\begin{description}
\item[(i)] $\mathcal{U}$ and $\mathcal{V}$ are in {\rm generic position}{\rm ;}
\item[(ii)] there exist covers $\widetilde{\mathcal{U}}=\{\widetilde{U}_i\}_{i\in I}$ and $\widetilde{\mathcal{V}}=\{\widetilde{V}_j\}_{j\in J}$ both consisting of topological discs
 such that $U_i\subset\widetilde{U}_i$ and $V_j\subset\widetilde{V}_j$ for all $(i,j)\in I\times J$, and that
 any $\{\widetilde{U}_i, \widetilde{V}_j\}$ cannot cover $M${\rm ;}
\item[(iii)]  there exists $L\in\mathbb{N}$ such that for any point $x\in M$,
$$
\#\{i\in I\,|\,x\in U_i\}\ge L\quad\hbox{and}\quad \#\{j\in J:x\in V_j\}\ge L.
$$
\end{description}
 Then,
$$
\#\bigcup_{i,j}(\partial U_i\cap\partial V_j)\ge 2(L+1)^2.
$$
\end{lemma}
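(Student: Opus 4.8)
The plan is to adapt the arrangement analysis underlying Lemmas~3.2 and 3.4 of \cite{BLT17}. I would begin with a reduction to a generic configuration. After a $C^\infty$-small perturbation of the boundary curves $\{\partial U_i\}$ and $\{\partial V_j\}$ — harmless because ``good cover'' and ``generic position'' are open dense conditions, the inclusions $U_i\subset\widetilde U_i$, $V_j\subset\widetilde V_j$ survive small perturbations, and the multiplicity hypothesis (iii) is stable since the covers are open and $M$ is compact (alternatively one proves the estimate for the perturbed data and lets the perturbation tend to $0$) — one may assume that $\Gamma:=(\bigcup_{i\in I}\partial U_i)\cup(\bigcup_{j\in J}\partial V_j)$ is a finite graph on $M$ all of whose vertices are transversal double points, with no vertex lying on a $\partial U$-curve, a $\partial V$-curve and a third curve simultaneously. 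Then the set $P:=\bigcup_{i,j}(\partial U_i\cap\partial V_j)$ to be counted is exactly the set of mixed vertices of $\Gamma$, and by genericity $\#P=\sum_{i,j}\#(\partial U_i\cap\partial V_j)$ with each summand even, since each $\partial U_i$ and $\partial V_j$ is null-homologous over $\Z/2$ (being the boundary of an open set). I would also isolate the only way (ii) and $\kappa\ge 3$ enter: (a) every connected component of every $\partial U_i$ and every $\partial V_j$ bounds an embedded disc in $M$ — automatic when $M=S^2$, and in general a consequence of $U_i\subset\widetilde U_i$, $V_j\subset\widetilde V_j$ being contained in discs; and (b) no two members of $\mathcal U\cup\mathcal V$ have union $M$, since otherwise the two corresponding members of $\widetilde{\mathcal U}\cup\widetilde{\mathcal V}$ would cover $M$ and force $\kappa\le 2$. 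Conditions (a) and (b) are inherited by every sub-cover, which is what makes an induction possible.

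The core is a face-by-face analysis of $\Gamma$, carried out in two stages as in \cite{BLT17}. Stage one treats $L=1$ and proves $\#P\ge 4$: supposing $\#P\le 2$, one examines the faces of $M\setminus\bigcup_i\partial U_i$ and of $M\setminus\bigcup_j\partial V_j$ and, using (a) to control how these ``planar'' curve systems decompose $M$, produces two members of $\mathcal U\cup\mathcal V$ whose union is $M$, contradicting (b); together with the parity of $\#P$ this yields $\#P\ge 4=(1+1)^2$. Stage two amplifies this using hypothesis (iii): since every point of $M$ lies in at least $L$ of the $U_i$ and at least $L$ of the $V_j$, deleting one suitably chosen set from each of $\mathcal U$ and $\mathcal V$ leaves sub-covers of multiplicity $\ge L-1$ still satisfying (a) and (b), so the inductive hypothesis supplies at least $L^2$ mixed vertices on the surviving boundary curves, and it remains to produce the further $(L+1)^2-L^2=2L+1$ mixed vertices lying on the two deleted boundaries. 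That count is obtained by choosing the deleted sets so that their boundaries ``enclose'' the deeply covered part of $M$ and counting their intersections with the other cover's curves by a nesting/level-set argument in the spirit of the proof of Lemma~\ref{le:2.3} (a simple closed curve contained in a disc meets any transversal enclosing curve at least twice), with care at the overlap $\partial U_{i_0}\cap\partial V_{j_0}$, which is controlled via the parity in (a) and the genericity (i).

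I expect stage two — the amplification bookkeeping — to be the main obstacle. The difficulty is that a carelessly chosen deleted set may meet the other cover's curves in very few points, so the two sets to be removed must be selected using the covering-depth functions $x\mapsto\#\{i:x\in U_i\}$ and $x\mapsto\#\{j:x\in V_j\}$ in such a way as to keep the multiplicity at $\ge L-1$ and, simultaneously, to force the removed boundaries to carry the required $2L+1$ new mixed crossings; making this selection work uniformly in $L$, keeping the $U$- versus $V$-accounting straight, and ruling out the low-crossing configurations in stage one, is precisely the delicate part, and is exactly what the analysis in \cite{BLT17} is designed to handle. The only contribution here is that hypotheses (i)--(iii) have been arranged so that, after the reduction to (a) and (b), that analysis applies essentially unchanged.
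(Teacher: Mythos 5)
Your outline does not actually prove the lemma: the decisive step, your ``stage two'' amplification, is exactly the part you leave to \cite{BLT17}, and the scheme you propose for it (delete one set from $\mathcal U$ and one from $\mathcal V$, invoke an inductive hypothesis for multiplicity $L-1$ to get $L^2$ mixed vertices, then find $2L+1$ further mixed vertices on the two deleted boundaries) faces a genuine obstruction that you do not resolve. There is no reason the boundary of a deleted set must cross the other family's boundaries at all: $\partial U_{i_0}$ can lie entirely in the interiors of all the $V_j$ that it meets (hypothesis (iii) only says each of its points is covered by many $V_j$, which produces no crossings), so the $2L+1$ extra intersections cannot be extracted from the deleted curves by a nesting/level-set count without a substantially new idea, and your stage-one base case ($L=1$ gives at least $4$) is likewise only asserted. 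Moreover the deferral is misdirected: the argument in \cite{BLT17}, which is the one this paper follows, is not an induction on $L$ with set deletion at all.

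The paper's proof replaces your induction by a permutation double-counting. For each pair of permutations $\alpha\in S_I$, $\beta\in S_J$ one forms the ``first boundary reached'' curve systems $\Gamma_\alpha=\cup_i\bigl(\partial U_{\alpha(i)}\cap U^c_{\alpha(i-1)}\cap\cdots\cap U^c_{\alpha(1)}\bigr)$ and $\Gamma'_\beta$; every face of $\Gamma_\alpha$ lies in some $U_i\subset\widetilde U_i$ (Step 1), and the disc hypothesis (ii) together with $\kappa(\widetilde{\mathcal U}\cup\widetilde{\mathcal V})\ge 3$ forces $\#(\Gamma_\alpha\cap\Gamma'_\beta)\ge 1$ (Step 2, via maximal faces: $\partial G$ must meet both $G'$ and its complement, the case $M\setminus G'\subset G$ being excluded by $\kappa\ge 3$ --- this is where your conditions (a), (b) enter, but only to produce a single point per pair). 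Hypothesis (iii) then enters purely combinatorially: a point $x\in\partial U_i$ lies on $\Gamma_\alpha$ only if $\alpha^{-1}(i)<\alpha^{-1}(k)$ for every $k$ with $x\in U_k$, so at most $|I|!/(L+1)$ permutations $\alpha$ (and $|J|!/(L+1)$ permutations $\beta$) see $x$, and summing $\#(\Gamma_\alpha\cap\Gamma'_\beta)\ge 1$ over all $|I|!\,|J|!$ pairs gives $\#\cup_{i,j}(\partial U_i\cap\partial V_j)\ge (L+1)^2$ with no induction and no selection of ``deeply enclosing'' sets. Since your proposal neither carries out its own amplification nor matches the mechanism it appeals to, it has a gap at the heart of the lemma; the reduction-to-genericity preamble is also superfluous, as goodness and generic position are already hypotheses (i).
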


\begin{proof}
Let $I=\{1,2,\cdots, |I|\}$ and $J=\{1,2,\cdots,|J|\}$.
Denote by $S_I$ and $S_J$ the sets of permutations on the elements of $I$ and $J$ respectively.
For $\alpha\in S_I$ and $\beta\in S_J$, define the unions of curves
\begin{eqnarray*}
&&\Gamma_{\alpha}:=\bigcup_{i\in I}(\partial U_{\alpha(i)}\cap U^c_{\alpha(i-1)}\cap\cdots\cap U^c_{\alpha(1)}), \\
&&\Gamma'_{\beta}:=\bigcup_{j\in J}(\partial V_{\beta(j)}\cap V^c_{\beta(j-1)}\cap\cdots\cap V_{\beta(1)}^c).
\end{eqnarray*}

\noindent{\bf Step 1}. {\it  Given  a non-empty connected component $P$ of $M\setminus\Gamma_{\alpha}$
{\rm (}resp. $M\setminus\Gamma'_{\beta}${\rm )}, we prove that there exists  $i\in I$ {\rm (}resp. $j\in J${\rm )}
such that $P\subset U_i$ {\rm (}resp. $P\subset V_j${\rm )}.}
We only prove the case for the component of $M\setminus\Gamma_{\alpha}$.
Another case is similar. Assume that $P\nsubseteq U_i$ for all $i\in I$.
  Since $\partial U_{\alpha(1)}\subset U^c_{\alpha(1)}\subset\Gamma_{\alpha}$,
  we get $P\cap\partial U_{\alpha(1)}\subset P\cap\Gamma_{\alpha}=\emptyset$. This and the fact that
   $P\nsubseteq U_{\alpha(1)}$ imply that $P\subset U^c_{\alpha(1)}$. Assuming $P\subset U^c_{\alpha(1)}\cap\cdots\cap U^c_{\alpha(i-1)}$,
    we conclude that $P\cap\partial U_{\alpha(i)}\subset P\cap\Gamma_{\alpha}=\emptyset$,
    since $\partial U_{\alpha(i)}\cap  U^c_{\alpha(1)}\cap\cdots\cap U^c_{\alpha(i-1)}\subset\Gamma_{\alpha}$.
      As above, it follows  that $P\subset U^c_{\alpha(i)}$, since $P\nsubseteq U_{\alpha(i)}$.
   By induction on $i\in I$, we obtain that $P\subset\cap_{i\in I}U^c_{\alpha(i)}=\emptyset$,
     which contradicts $P\ne\emptyset$.

\noindent{\bf Step 2}. {\it Proof of $\#(\Gamma_{\alpha}\cap\Gamma'_{\beta})\ge 1$}. First, let us show that by removing out parts from $\Gamma_{\alpha}$ and $\Gamma'_{\beta}$, we can get $\widetilde{\Gamma}_{\alpha}$ and $\widetilde{\Gamma}'_{\beta}$ such that their faces are open topological discs.
By Step 1, for any face $P$ of $\Gamma_{\alpha}$, there exists $i\in I$ such that $P\subset U_i\subset\widetilde{U}_i$.
Let $\widetilde{P}$ be the open topological disc of minimal area that contains $P$ and is contained in $\widetilde{U}_i$. Then $\partial{\widetilde{P}}\subset\partial P\subset\Gamma_{\alpha}$, and   $\widetilde{P}$ is a face of $\Gamma_\alpha^1:=\Gamma_\alpha\setminus(\Gamma_{\alpha}\cap\widetilde{P})$.
%with piecewise smooth boundary.
For a face  ${P}_1$ of $\Gamma_\alpha^1$ which is not an open topological disc,
 repeating the above argument, we finally  get $\widetilde{\Gamma}_{\alpha}$
 such that each face of $\widetilde{\Gamma}_{\alpha}$ is  an open topological disc.
 Similarly, starting from $\Gamma'_{\beta}$, we obtain a $\widetilde{\Gamma}'_{\beta}$ with the same properties.
 It is obvious that
 $$
 \#(\Gamma_{\alpha}\cap\Gamma'_{\beta})\ge\#(\widetilde{\Gamma}_{\alpha}\cap\widetilde{\Gamma}'_{\beta}).
 $$

Call a face $G$ of $\widetilde{\Gamma}_{\alpha}$ (resp. $\widetilde{\Gamma}'_{\beta}$) is {\it maximal} if it is not properly contained in any face of
$\widetilde{\Gamma}'_{\beta}$ (resp. $\widetilde{\Gamma}_{\alpha}$).
 Notice that any non-maximal face of $\widetilde{\Gamma}_{\alpha}$ is contained in a maximal face of $\widetilde{\Gamma}'_{\beta}$.
 (Indeed,  for a non-maximal face $P$ of $\widetilde{\Gamma}_{\alpha}$,
 there exists a face $Q$ of $\widetilde{\Gamma}'_{\beta}$ such that $P\subsetneq Q$ by the definition. We claim that $Q$ is a maximal face of $\widetilde{\Gamma}'_{\beta}$.
  Otherwise, there exists a face $P'$ of $\widetilde{\Gamma}_{\alpha}$ such that $P\subsetneq Q\subsetneq P'$,
  which contradicts the fact that $P$ is a face and thus a connected component.)  Therefore, the union of maximal faces of both $\widetilde{\Gamma}_{\alpha}$ and $\widetilde{\Gamma}'_{\beta}$ covers the complementary of $\widetilde{\Gamma}_{\alpha}\cap\widetilde{\Gamma}'_{\beta}$ in
  $M$. Hence, either $\widetilde{\Gamma}_{\alpha}$ or $\widetilde{\Gamma}'_{\beta}$ has at least one maximal face.
     Without loss of generality, we assume that  $\widetilde{\Gamma}_{\alpha}$
    has maximal faces.

Let $G$ be a maximal face of $\widetilde{\Gamma}_{\alpha}$. Then
 there exists a face $G'$ of $\widetilde{\Gamma}'_{\beta}$ such that
 \begin{equation}\label{e:3.1}
 \partial G\cap G'\neq\emptyset.
 \end{equation}
 (Otherwise, $\partial G\subset\widetilde{\Gamma}'_{\beta}$, and thus $\#(\widetilde{\Gamma}_{\alpha}\cap\widetilde{\Gamma}'_{\beta})=\infty$,
 the conclusion is proved.) We claim that
 \begin{equation}\label{e:3.2}
 \partial G\cap(M\setminus G')\neq\emptyset.
  \end{equation}
  In fact, if $\partial G\cap(M\setminus G')=\emptyset$, then $\partial G\subset G'$.
   We have either $M\setminus G'\subset G$ or $G\subset G'$, since $M\setminus G'$ is connected. The first case is impossible by the assumption (ii). In the second case, we get $G\subset G'$,
    which contradicts the maximality of $G$.

 Since the boundaries $\partial G$ and $\partial G'$ are simple closed curves
 and also intersect transversely, (\ref{e:3.1}) and (\ref{e:3.2}) imply $\#(\partial G\cap\partial G')\ge 2$. Therefore,
 $$
 \#(\Gamma_{\alpha}\cap\Gamma'_{\beta})\ge\#(\widetilde{\Gamma}_{\alpha}\cap\widetilde{\Gamma}'_{\beta})\ge\#(\partial G\cap\partial G')\ge 2.
 $$

\noindent{\bf Step 3}. Since $\Gamma_{\alpha}\subset\cup_{i\in I}\partial U_i$ and $\Gamma'_{\beta}\subset\cup_{j\in J}\partial V_j$, we have $\Gamma_{\alpha}\cap\Gamma'_{\beta}\subset\cup_{i,j}(\partial U_i\cap\partial V_j)$ and
\begin{equation}\label{e:3.3}
\triangle:=\bigcup_{\alpha\in S_I}\bigcup_{\beta\in S_J}\Gamma_{\alpha}\cap\Gamma'_{\beta}\subset\cup_{i,j}(\partial U_i\cap\partial V_j).
  \end{equation}
Fixing a point $x\in\triangle$, we estimate
$$
\#\{(\alpha,\beta)\in S_I\times S_J\,|\, x\in\Gamma_{\alpha}\cap\Gamma'_{\beta}\}=\#\{\alpha\in S_I\,|\, x\in\Gamma_\alpha\}\times
\#\{\beta\in S_J\,|\, x\in\Gamma'_\beta\}.
$$
 Let $i\in I$ be such that $x\in\partial U_i$. By the definition of $\Gamma_\alpha$,
  $x\in\Gamma_{\alpha}$ only if $\alpha^{-1}(i)<\alpha^{-1}(k)$ for any $k\in I$ such that $x\in U_k$. By our assumption, $\#\{k\in I\,|\, x\in U_k\}\ge L$. Because of symmetry, the number of permutations $\sigma=\alpha^{-1}$ for which $\#\{k\in I\,|\,\sigma(i)<\sigma(k)\}\ge L$  is at most $\frac{|I|!}{(L+1)}$.
  It follows that
  $$
  \#\{\alpha\in S_I\,|\, x\in\Gamma_\alpha\}\le \frac{|I|!}{(L+1)}.
  $$
    Similarly, we have $\#\{\beta\in S_J\,|\, x\in\Gamma_\beta\}\le \frac{|J|!}{(L+1)}$.

   Note that the disjoint union
   $$
   \coprod_{\alpha\in S_I}\coprod_{\beta\in J}(\Gamma_{\alpha}\cap\Gamma'_{\beta})
   $$
   has exactly
   $$
   \sum_{\alpha\in S_I}\sum_{\beta\in S_J}\#(\Gamma_{\alpha}\cap\Gamma'_{\beta})
   $$
    elements, and that
   each point $x\in\triangle$
   appears $\#\{(\alpha,\beta)\in S_I\times S_J\,|\, x\in\Gamma_{\alpha}\cap\Gamma'_{\beta}\}$ times
   in $\coprod_{\alpha\in S_I}\coprod_{\beta\in S_J}(\Gamma_{\alpha}\cap\Gamma'_{\beta})$. Then
   \begin{eqnarray*}
   &&\#\left(\bigcup_{\alpha\in S_I}\bigcup_{\beta\in S_J}\Gamma_{\alpha}\cap\Gamma'_{\beta}\right)\times
   \frac{|I|!}{(L+1)}\times\frac{|J|!}{(L+1)}\\
   &\ge&\#\left(\bigcup_{\alpha\in S_I}\bigcup_{\beta\in S_J}\Gamma_{\alpha}\cap\Gamma'_{\beta}\right)\times
   \max\left\{\#\{(\alpha,\beta)\in S_I\times S_J\,|\, x\in\Gamma_{\alpha}\cap\Gamma'_{\beta}\}\,|\,
   x\in\triangle\right\}\\
   &\ge& \#\left(\coprod_{\alpha\in S_I}\coprod_{\beta\in S_J}(\Gamma_{\alpha}\cap\Gamma'_{\beta})\right)\\
   &=&\sum_{\alpha\in S_I}\sum_{\beta\in S_J}\#(\Gamma_{\alpha}\cap\Gamma'_{\beta})
   \end{eqnarray*}
    and hence
  \begin{eqnarray*}
\#\bigcup_{i,j}\left(\partial U_i\cap\partial V_j\right)&\ge&
\#\left(\bigcup_{\alpha\in S_I}\bigcup_{\beta\in S_J}\Gamma_{\alpha}\cap\Gamma'_{\beta}\right)\\
&\ge&\frac{L+1}{|I|!}\frac{L+1}{|J|!}\sum_{\alpha\in S_I}\sum_{\beta\in S_J}\#(\Gamma_{\alpha}\cap\Gamma'_{\beta})\\
&\ge& 2(L+1)^2.
\end{eqnarray*}
\end{proof}

In order to prove Theorem~\ref{th:1.4}, we need the following corresponding result of \cite[Theorem~$1.5^\prime$]{BLT17}.

\begin{theorem}\label{th:1.4+}
Let $(M,\omega)$ be a closed connected symplectic surface and let
$\mathcal{U}=\{U_i\}_{i\in I}$ and $\mathcal{V}=\{V_j\}_{j\in J}$
be two finite open covers  consisting of topological discs. Suppose that $\{U_i, V_j\}$ is not an open cover of $M$
for each pair $(i,j)\in I\times J$. Then for any partition of unity $\mathcal{F}:=\{f_i\}_{i\in I}$ subordinate to $\mathcal{U}$,
and $\mathcal{G}:=\{g_j\}_{j\in J}$ subordinate to $\mathcal{V}$, there holds
$$
\int_M\sum_{i\in I}\sum_{j\in J}|\{f_i,g_j\}|\omega\ge 2.
$$
\end{theorem}

\begin{remark}\label{rm:3.6}
{\rm In \cite{Pay19+}, J. Payette told us that the lower bound in
 Lemma 3.4 of the original version  could be multiplied by $2$.  As a
result, we can prove Theorem~\ref{th:1.4+} and thus improve
the lower bounds in the original Theorem 1.7.}
\end{remark}

We postpone the proof of this theorem until the end of this section.

\begin{proof}[Proof of Theorem~\ref{th:1.4}]
Since $\kappa(\mathcal{U})\ge 3$, any two open subsets in $\mathcal{U}$ cannot cover $M$.
Hence, applying Theorem~\ref{th:1.4+} to $\mathcal{U}$ and $\mathcal{V}=\mathcal{U}$, we may obtain
$$
\int_M\sum^N_{i,j=1}|\{f_i,f_j\}|\omega\ge 2
$$
for any partition of unity $\mathcal{F}:=\{f_i\}^N_{i=1}$ subordinate to $\mathcal{U}$.

As to the second claim, by Lemma~\ref{le:2.1} for $n=1$, we deduce that
  \begin{eqnarray*}
  \nu_c(\mathcal{F})&=&\max_{a,b\in[-1,1]^N}\left\|\left\{\sum_{i=1}^{N}a_if_i,\sum_{j=1}^{N}b_jf_j\right\}\right\|,\\
   &\ge&c(1) \cdot \max_{M}\sum_{i,j=1}^{N}|\{f_i,f_j\}|\\
   &\ge&\frac{c(1)}{{\rm Area}(M)} \int_M\sum^N_{i,j=1}|\{f_i,f_j\}|\omega \\
   &=&\frac{2c(1)}{{\rm Area}(M)}.
   \end{eqnarray*}
Set $C:=2c(1)$. Because of the arbitrariness of $\mathcal{F}$, this leads to
$$
{\rm pb}(\mathcal{U})\ge \frac{C}{{\rm Area}(M)}.
$$
\end{proof}

\begin{proof}[Proof of Theorem~\ref{th:1.4+}]
The basic ideas are the same as those of \hbox{\cite[Theorem~${1.5}^\prime$]{BLT17}}.
Fix a large $L\in\mathbb{N}$ such that $L> |I|+|J|$.  We shall use the functions $\{f_i\}_{i\in I}$ and
$\{g_j\}_{j\in J}$ to construct covers satisfying the assumptions of Lemma~\ref{le:3.4}. For every $(i,j)\in I\times J$, pick $m_i, n_j\in\mathbb{N}$ such that
$$
\frac{m_i}{L}>\max_{M}f_i\quad\hbox{and}\quad \frac{n_j}{L}>\max_{M}g_j.
$$
Consider the intervals
\begin{eqnarray*}
&&\mathcal{I}_{i,k}:=\left[\frac{k-1}{L},\frac{k}{L}\right],\quad
k=1,\cdots, m_i,\\
&&\mathcal{J}_{j,l}:=\left[\frac{l-1}{L},\frac{l}{L}\right],\quad
l=1,\cdots, n_j.
\end{eqnarray*}
  Denote by $s_{i,k}\in\mathcal{I}_{i,k}$ (resp. $t_{j, l}\in\mathcal{J}_{j,l}$) an independent variable. %We think of $s_{i,k}$ as representing a value of the function $f_i$.
  We equip the intervals $\mathcal{I}_{i,k}$ and $\mathcal{J}_{j,l}$
  with the normalized Lebesgue measure $\mu_{i,k}:=Lds_{i,k}$ and $\nu_{j,l}:=Ldt_{j,l}$, respectively.
 Put $m:=\sum_{i\in I}m_i$ and $n:=\sum_{j\in J}n_j$. Then
$$
\mathcal{C}:=\prod_{i\in I}\prod_{1\leqslant k\leqslant m_i}\mathcal{I}_{i,k}\subset\mathbb{R}^m\quad\hbox{and}\quad
\mathcal{D}:=\prod_{j\in J}\prod_{1\leqslant l\leqslant n_j}\mathcal{J}_{j,l}\subset\mathbb{R}^n.
$$
 For $s:=(s_{i,k})_{i,k}\in\mathcal{C}$ and $t:=(t_{j,l})_{j,l}\in\mathcal{D}$, consider the open sets
 \begin{eqnarray*}
&&U_{i,k}^s:=U_{i,k}(s_{i,k})=\{f_i>s_{i,k}\},\quad 1\leqslant k\leqslant m_i,\; i\in I, \\
&&V_{j,l}^t:=V_{j,l}(t_{j,l})=\{g_j>t_{j,l}\},\quad 1\leqslant l\leqslant n_j,\; j\in J.
\end{eqnarray*}
Since $\sum_{i\in I}f_i=1$ and $\sum_{j\in J}g_j=1$, for any $x\in M$, there exist $i\in I$ and $j\in J$ such that
$$
f_i(x)\ge \frac{1}{|I|}>\frac{1}{L}>s_{i,1}\quad\hbox{and}\quad g_j(x)\ge \frac{1}{|J|}>\frac{1}{L}>t_{j,1}.
$$
 It follows that both $\mathcal{U}^s:=\{U_{i,k}^s\}_{i,k}$ and $\mathcal{V}^t:=\{V_{j,l}^t\}_{j,l}$ are open covers of $M$ for any
 $(s,t)\in\mathcal{C}\times\mathcal{D}$.

 Let us show that after $L$ is replaced by $\hat{L}:=L-|I|-|J|$, the covers $\mathcal{U}^s$ and $\mathcal{V}^t$
  satisfy the assumptions of Lemma~\ref{le:3.4} for almost all  $(s,t)\in\mathcal{C}\times\mathcal{D}$.
   The condition (iii) is easily checked. Given  $x\in M$,  for every $i\in I$, we have
$$\begin{aligned}
\#\left\{1\leqslant k\leqslant m_i\,|\,x\in U_{i,k}^s\right\}&= \#\left\{1\leqslant k\leqslant m_i\,|\, f_i(x)> s_{i,k}\right\}\\
&\ge\#\left\{1\leqslant k\leqslant m_i\,\Big|\, f_i(x)>\frac{k}{L}\right\} \\
&\ge Lf_i(x)-1.
\end{aligned}
$$
This implies that the number of sets in $\mathcal{U}^s$ containing $x$ is at least
$$
\sum_{i\in I}(Lf_i(x)-1)=L-|I|.
$$
Similarly,  the number of sets in $\mathcal{V}^s$ containing $x$ is at least $L-|J|$.

\noindent{\bf Claim}. {\it For almost all $(s,t)\in\mathcal{C}\times\mathcal{D}$, the cover $\mathcal{U}^s$ and $\mathcal{V}^t$ are good and in generic positions.}

 Indeed, consider maps
  \begin{eqnarray*}
&& \Phi_{i,i'}:M\longrightarrow\mathbb{R}^2,\;x\mapsto(f_i(x), f_{i'}(x)),\qquad\hbox{for all } i,i',\\
 && \Psi_{j,j'}:M\longrightarrow\mathbb{R}^2,\;x\mapsto(g_j(x), g_{j'}(x)),\qquad\hbox{for all } j,j',\\
 && \Upsilon_{i,j}:M\longrightarrow\mathbb{R}^2,\;x\mapsto(f_i(x), g_j(x)),\qquad\hbox{for all } i,j.
 \end{eqnarray*}
 By Morse-Sard theorem (\cite[page 69]{Hi94}), for almost all $(s,t)\in\mathcal{C}\times\mathcal{D}$,
we arrive at:  $(s_{i,k}, s_{i',k'})$ is a common regular value of all maps $\Phi_{i,i'}$,
  $(t_{j,l},t_{j',l'})$ is that of all maps $\Psi_{j,j'}$,
  and  $(s_{i,k},t_{j,l})$ is that of all maps $\Upsilon_{i,j}$.

  In particular, for such a pair $(s,t)$, the boundaries $\partial U_{i,k}^s$ and $\partial V_{j,l}^t$
  (resp. $\partial U_{i,k}^s$ and $\partial U_{i',k'}^s$,  $\partial V_{j,l}^t$ and $\partial V_{j',l'}^t$)
   intersect transversely.
    Moreover, if $\partial U_{i,k}^s\cap\partial V_{j,l}^t\cap\partial V_{j',l'}^t\ne\emptyset$ (resp.
    $\partial U_{i,k}^s\cap\partial U_{i',k'}^s\cap\partial V_{j,l}^t\ne\emptyset$), then
    $$
    \hbox{$(s_{i,k},t_{j,l},t_{j',l'})$ (resp. $(s_{i,k},s_{i',k'},t_{j,l})$)}
    $$
         is in the image of
    \begin{eqnarray*}
   && \Phi_{i,j,j'}:M\longrightarrow \mathbb{R}^3, x\mapsto (f_i(x),g_j(x),g_{j'}(x))\\
   &&\hbox{(resp. $\Psi_{i,i',j}:M\longrightarrow \mathbb{R}^3, x\mapsto (f_i(x),f_{i'}(x),g_{j}(x))$)}.
    \end{eqnarray*}
     Since the images ${\rm Im}(\Phi_{i,j,j'})$ and ${\rm Im}(\Psi_{i,i',j})$
     have the codimension at least $1$ in $\mathbb{R}^3$
    and thus both  $\cup_{i,j,j'}{\rm Im}(\Phi_{i,j,j'})$ and $\cup_{i,i',j}{\rm Im}(\Psi_{i,i',j})$
    have measure  zero, we deduce that for almost all $(s,t)\in\mathcal{C}\times\mathcal{D}$, the covers $\mathcal{U}^s$ and $\mathcal{V}^t$ are good and in generic positions.

  It remains to prove that (ii) of Lemma~\ref{le:3.4} is satisfied.
    Note that
    $$
    U_{i,k}^s:=\{f_i>s_{i,k}\}\subset {\rm supp}f_i\subset U_i\quad\hbox{and}\quad
    V_{j,l}^t:=\{g_j>t_{j,l}\}\subset {\rm supp}g_j\subset V_j.
    $$
    Taking $\widetilde{U_{i,k}^s}=U_i$ and $\widetilde{V_{j,l}^t}=V_j$, it is easy to see that they satisfy (ii) of Lemma~\ref{le:3.4}.

    In summary,  after $L$ is replaced by $\hat{L}$, $\mathcal{U}^s$ and $\mathcal{V}^t$ satisfy the assumptions of Lemma~\ref{le:3.4} for almost all $(s,t)\in\mathcal{C}\times\mathcal{D}$.
   %  \footnote{``after $L$ is replaced by $\hat{L}:=L-|I|-|J|$'' is newly added.}{\bf
  % \footnote{The original ``Applying Lemma~\ref{le:3.4} for $\hat{L}=L-|I|-|J|$ and almost every $(s,t)$,''
%   is changed into ``Therefore for almost all $(s,t)\in\mathcal{C}\times\mathcal{D}$, ''}
 Therefore, for almost all $(s,t)\in\mathcal{C}\times\mathcal{D}$,  we obtain
$$
\#\cup_{i,k,j,l}\left(\partial U_{i,k}^s\cap\partial V_{j,l}^t\right)\ge 2(\hat{L}+1)^2.
$$
Averaging this inequality over $(s,t)\in\mathcal{C}\times\mathcal{D}$ with respect to the normalized product measure $\mu\times\nu$,
where $\mu:=\prod_{i,k}\mu_{i,k}$ and $\nu:=\prod_{j,l}\nu_{j,l}$, we obtain
$$\begin{aligned}
2(\hat{L}+1)^2&\le\int_{\mathcal{C}\times\mathcal{D}} \#\cup_{i,k,j,l}\left(\partial U_{i,k}^s\cap\partial V_{j,l}^t\right)\,d\mu(s)d\nu(t)\\
&\le\int_{\mathcal{C}\times\mathcal{D}}\sum_{i,k,j,l}\#\left(\partial U_{i,k}^s\cap\partial V_{j,l}^t\right)d\mu(s)d\nu(t) \\
&=\sum_{i,k,j,l}\int_{\frac{k-1}{L}}^{\frac{k}{L}}\int_{\frac{l-1}{L}}^{\frac{l}{L}}\#\left(\partial U_{i,k}^s\cap\partial V_{j,l}^t\right)d\mu_{i,k}(s_{i,k})d\nu_{j,l}(t_{j,l})\\
&=L^2\sum_{i,k,j,l}\int_{\frac{k-1}{L}}^{\frac{k}{L}}\int_{\frac{l-1}{L}}^{\frac{l}{L}}\#\left(\partial U_{i,k}^s\cap\partial V_{j,l}^t\right)ds_{i,k}dt_{j,l},
\end{aligned}
$$
since $\mu_{i,k}:=Lds_{i,k}$ and $\nu_{j,l}:=Ldt_{j,l}$.
Moreover, for any values of $s_{i,k}$ and $t_{j,l}$, we have
$$
\partial U_{i,k}^s=\partial\{f_i>s_{i,k}\}\subset f_i^{-1}(s_{i,k})\quad\hbox{and}\quad
\partial V_{j,l}^t=\partial\{g_j> t_{j,l}\}\subset g_j^{-1}(t_{j,l}).
$$
 Hence,
$$
2(\hat{L}+1)^2\le L^2\sum_{i,k,j,l}\int_{\frac{k-1}{L}}^{\frac{k}{L}}\int_{\frac{l-1}{L}}^{\frac{l}{L}}\#\left(f_i^{-1}(s_{i,k})\cap g_j^{-1}(t_{j,l})\right)ds_{i,k}dt_{j,l}.
 $$
Recall that $\Upsilon_{i,j}=(f_i,g_j):M\rightarrow\mathbb{R}^2$. Set
$$
\Omega_{k,l}:=\left(\frac{k-1}{L},\frac{k}{L}\right)\times\left(\frac{l-1}{L},\frac{l}{L}\right)\subset\mathbb{R}^2.
$$
From Lemma~\ref{le:2.2}, we deduce
$$\begin{aligned}
2(\hat{L}+1)^2&\le L^2\sum_{i,k,j,l}\int_{\Omega_{k,l}}\int_{\frac{l-1}{L}}^{\frac{l}{L}}\#\left(f_i^{-1}(s_{i,k})\cap g_j^{-1}(t_{j,l})\right)ds_{i,k}dt_{j,l}\\
&=L^2\sum_{i,k,j,l}\int_{\Phi^{-1}_{i,j}(\Omega_{k,l})}|\{f_i,g_j\}|\omega\\
&\le L^2\sum_{i,j}\int_M|\{f_i,g_j\}|\omega
\end{aligned}
$$
and thus
$$
\sum_{i,j}\int_M |\{f_i,g_j\}|\omega\ge\frac{2(\hat{L}+1)^2}{L^2}=\frac{2(L+1-|I|-|J|)^2}{L^2}.
$$
Letting $L\rightarrow\infty$,  the conclusion  is proved.
\end{proof}

{\bf Acknowledgements}.\quad
Both authors are deeply grateful to Professor Lev Buhovsky for wonderful reports and explanations on their work. We also thank Dr. Jordan Payette for giving us some valuable advice of modification, which improves the original version. Finally, we would like to thank the anonymous referees for pointing out some mistakes, explicit improving suggestions and other valuable comments.

%\begin{ack}[Acknowledgements]
%Both authors are deeply grateful to Professor Lev Buhovsky for wonderful reports and explanations on their work. We also thank Dr. Jordan Payette for giving us some valuable advice of modification, which improves the original version. Finally, we would like to thank the anonymous referees for pointing out some mistakes, explicit improving suggestions and other valuable comments.
%\end{ack}

%\authorname{Kun Shi}
%\address{School of Mathematical Sciences, Beijing Normal University\\
%Laboratory of Mathematics and Complex Systems, Ministry of Education\\
%Beijing 100875, The People's Republic of China}
%\email{shikun@mail.bnu.edu.cn}
%
%\authorname{Guangcun Lu(Corresponding author)}
%\address{School of Mathematical Sciences, Beijing Normal University\\
%Laboratory of Mathematics and Complex Systems, Ministry of Education\\
%Beijing 100875, The People's Republic of China}
%\email{gclu@bnu.edu.cn}

\medskip
\begin{tabular}{l}
Kun Shi\\
 School of Mathematical Sciences, Beijing Normal University\\
 Laboratory of Mathematics and Complex Systems, Ministry of Education\\
 Beijing 100875, The People's Republic of China\\
 E-mail address: shikun@mail.bnu.edu.cn\vspace{5mm}\\
 %\end{tabular}
%
%\medskip
%\begin{tabular}{2}
Guangcun Lu (Corresponding author)\\ 
 School of Mathematical Sciences, Beijing Normal University\\
 Laboratory of Mathematics and Complex Systems, Ministry of Education\\
 Beijing 100875, The People's Republic of China\\
 E-mail address: gclu@bnu.edu.cn\\
\end{tabular}

\end{document}